\newcommand{\dtr}{\mathcal{D}^{tr}}
\newcommand{\R}{\mathbb{R}}
\newcommand{\C}{\mathbb{C}}
\newcommand{\Z}{\mathbb{Z}}
\numberwithin{equation}{section}
\newcommand{\ud}[0]{\,\mathrm{d}}
\newcommand{\dist}[0]{\operatorname{dist}}
\newcommand{\BMO}[0]{\operatorname{BMO}}
\newcommand{\calD}[0]{\mathcal{D}}
\newcommand{\rest}{{\lfloor}}
\theoremstyle{plain}
\newtheorem{thm}[equation]{Theorem}
\newtheorem{lem}[equation]{Lemma}
\newtheorem{prop}[equation]{Proposition}
\newtheorem{cor}[equation]{Corollary}
\theoremstyle{definition}
\newtheorem{defn}[equation]{Definition}
\newtheorem{exmp}[equation]{Example}
\theoremstyle{remark}
\newtheorem{rem}[equation]{Remark}
\title[A new approach to non-homogeneous local $Tb$ theorems]{A new approach to non-homogeneous local $Tb$ theorems}
\author{Henri Martikainen}
\address[H.M.]{Department of Mathematics and Statistics, University of Helsinki, P.O.B. 68, FI-00014 University of Helsinki, Finland}
\email{henri.martikainen@helsinki.fi}
\thanks{H.M. was supported by the Academy of Finland through the grant
Multiparameter dyadic harmonic analysis and probabilistic methods. Research of M.M. was supported by the ERC grant 320501 of the European Research Council (FP7/2007-2013).
M.M. was also supported  by IKERBASQUE and partially supported by the grant MTM-2017-82160-C2-2-P of the Ministerio de Econom\'ia y Competitividad (Spain). H.M. and E.V. are members of the Finnish Centre of Excellence in Analysis and Dynamics Research.}
\author{Mihalis Mourgoglou}
\address[M.M.]{Departament de Matem\`atiques, Universitat Aut\`onoma de Barcelona and Centre de Reserca Matem\` atica, Edifici C Facultat de Ci\`encies, 08193 Bellaterra (Barcelona)}
\curraddr{Departamento de Matem\'aticas, Universidad del Pa\' is Vasco, Barrio Sarriena s/n 48940 Leioa, Spain and\\
Ikerbasque, Basque Foundation for Science, Bilbao, Spain.}
\email{michail.mourgoglou@ehu.eus}
\author{Emil Vuorinen}
\address[E.V.]{Department of Mathematics and Statistics, University of Helsinki, P.O.B. 68, FI-00014 University of Helsinki, Finland}
\email{emil.vuorinen@helsinki.fi}
\subjclass[2010]{42B20}
\keywords{non-homogeneous analysis, Local $Tb$ theorems, singular integrals, square functions}
\begin{document}
\maketitle

\begin{abstract}
We develop a new general method to prove various non-doubling local $Tb$ theorems.
The method combines the non-homogeneous good lambda method of Tolsa,
the big pieces $Tb$ theorem of Nazarov--Treil--Volberg and a new change of measure argument
based on stopping time techniques. We also improve known results and discuss some further applications.
\end{abstract}
\section{Introduction}
By a \emph{singular integral operator} (SIO) related to some Borel measure $\mu$ in $\R^n$ we understand a linear operator $T$ for which
we have for all nice and disjointly supported functions $f_1, f_2$ that
$$
\int_{\R^n} Tf_1(x) f_2(x)\ud \mu(x) = \iint_{\R^n \times \R^n} K(x,y) f_1(y) f_2(x)\ud \mu(y) \ud \mu(x)
$$ for a suitable kernel $K$ defined for $x \ne y$.
An example in $\R$
is the Hilbert transform, where $\ud \mu(x) = \ud x$ and $K(x,y) = 1/(x-y)$.
In general, SIOs need not be bounded in $L^2(\mu)$:
the assumed size properties of the kernel $K$ are too weak for this and more subtle cancellation has to be accounted for.
This additional cancellation can most conveniently be formulated using various testing conditions.
Such theorems characterising the boundedness of SIOs via testing conditions are called $Tb$ theorems. The rough idea
is that the action of $T$ needs to be studied only on some single non-degenerate function $b$ (such as $b=1$), or in the case of local $Tb$ theorems,
on a suitable family of non-degenerate functions $b_Q$ indexed by cubes $Q$ (such as $b_Q = 1_Q$). An example of such a testing condition
is the local $T1$ condition:
$$
\sup_{\substack{Q \subset \R^n \\ Q \textup{ is a cube}}} \Big[ \frac{1}{\mu(Q)} \int_Q |T1_Q|^2\ud \mu + \frac{1}{\mu(Q)} \int_Q |T^{*}1_Q|^2\ud \mu \Big] < \infty.
$$
The theory is significantly more subtle if the underlying measure $\mu$ is non-doubling: such theory is called non-homogeneous Calder\'on--Zygmund analysis.
The usual assumption then is that $\mu(B(x,r)) \le Cr^m$ for some exponent $m$, and that the kernel estimates for $K$ are tied to this $m$ (see
Section \ref{sec:CZO}).

In this paper we develop a new method to prove certain very rough (meaning that the test functions $b_Q$ can have very low integrability)
local $Tb$ theorems, and prove new and very general results. The theorems are formulated in the
non-homogeneous situation. However, the results are already new even in the Lebesgue measure case.
The developed method is very convenient and modular, and we will refer to multiple further works where it has been successfully applied.

\subsection*{History and context}
The history of $Tb$ theorems is extremely wide. The starting point in the Lebesgue measure case is the famous $T1$ theorem by
David--Journ\'e \cite{DJ}. Our focus is, however, on the local variants that were first introduced by M. Christ \cite{Ch}. They are flexible tools
as the non-degeneracy (also referred to as accretivity)
of a given test function $b_Q$ is only assumed on its supporting cube $Q$, i.e., $|\int_Q b_Q\,d\mu| \gtrsim \mu(Q)$.
In a global $Tb$ theorem a single function $b$ has to satisfy this non-degeneracy in all cubes, and constructing such
a $b$ can be more difficult. Christ's version works for doubling measures $\mu$ (but not more general than that) and
requires that the test functions themselves are nice, $b_Q \in L^{\infty}(\mu)$, and that $Tb_Q$ satisfies a demanding testing condition, $Tb_Q \in L^{\infty}(\mu)$. 
Very significant efforts have been made by multiple authors
to allow \emph{both} rougher test functions and less demanding testing conditions. A parallel line of investigation
has dealt with the corresponding results in the non-homogeneous situation -- we get to this later.

In general, we actually require two families of test functions -- one for $T$ and one for the adjoint $T^{*}$. In what follows
we will always assume that $T$ is antisymmetric, $T^* = -T$, which makes the discussion easier to follow. However, the completely general case sometimes involves very real difficulties -- but we are not concerned with them here.

We say that a function $b_Q$ is an $L^p(\mu)$-admissible, $p \in [1, \infty)$, test function on a cube $Q \subset \R^n$ (with constant $B_1$), if
\begin{enumerate}
\item spt$\,b_Q \subset Q$, $\mu(Q) = \int_Q b_Q\,d\mu$ and
\item $\big( \frac{1}{\mu(Q)} \int_Q |b_Q|^p\,d\mu \big)^{1/p} \le B_1$.
\end{enumerate}
A long standing problem (even for the Lebesgue measure) asks whether the $L^2$ boundedness
of (an antisymmetric) SIO $T$ follows if we are given $p \in (1,\infty)$, and for every
cube $Q$ an $L^p(\mu)$-admissible test function $b_Q$ (with the same constant $B_1$) so that
\begin{equation}\label{eq:Hofmann}
\int_Q |Tb_Q|^{p'}\,d\mu \le B_2\mu(Q).
\end{equation}
Here $1/p + 1/p' = 1$.
This is often referred to as Hofmann's local $Tb$ problem.
For certain simpler model operators this type of local $Tb$ theorem in the Lebesgue measure case appears in Auscher--Hofmann--Muscalu--Tao--Thiele \cite{AHMTT}. The extension to the SIO case has turned out to be of extreme
difficulty -- particularly if $p < 2$. In fact, for $p < 2$ this is still not known in general. Even if the testing condition on $1_QTb_Q$ becomes more demanding here as $p'$ grows, i.e., when $p$ gets smaller, the lower integrability of the test functions $b_Q$ is desired.
Hyt\"onen--Nazarov \cite{HN} showed in the Lebesgue measure case that the $L^2$ boundedness follows from the \emph{buffered} testing condition
$
\int_{2Q} |Tb_Q|^{p'}\,dx \lesssim |Q|
$
for any $p \in (1,\infty)$.
A key thing in the Lebesgue measure case is that if $p \ge 2$, then the original testing conditions (with $1_QTb_Q$) automatically
imply the stronger buffered testing conditions (with $1_{2Q}Tb_Q$) by Hardy's inequality. Previous related results include Auscher--Yang \cite{AY}, Auscher--Routin \cite{AR} and Hofmann \cite{Ho1}. In the follow-up paper by two of us and Tolsa \cite{MMT} we use and build on the methods presented in this paper to fully prove the difficult case $p < 2$ in $\R$, and show
that we can always allow at least some exponents $p \in (2-\epsilon, 2]$ even when working in $\R^n$ with $n > 1$.

We turn to discuss the non-homogeneous aspects in more detail. Non-doubling theory has been pioneered by David \cite{Da1}, Nazarov--Treil--Volberg \cite{NTV} and Tolsa (see e.g. the book \cite{ToBook}). Previously, it was strongly believed that the class of doubling measures was the right class for the theory of singular integrals.
The non-homogeneous theory is of extreme importance in many geometric questions -- to mention just one key result, see the proof of the semiadditivity of analytic capacity by Tolsa \cite{ToBook, To1}. Moreover, Nazarov--Treil--Volberg \cite{NTV} developed their so-called dyadic-probabilistic methods to deal with the difficulties
posed by general measures, but such methods have also been extremely influential in other instances. For example, they led
to the dyadic representation theorems and the first solution of the $A_2$ conjecture by Hyt\"onen \cite{Hy}.

Nazarov--Treil--Volberg \cite{NTVa} proved the first local $Tb$ theorem in the non-doubling situation -- there $b_Q \in L^{\infty}(\mu)$ and
$Tb_Q \in \BMO_2(\mu)$ (understood to mean that for all cubes $R$ we have $\int_R |Tb_Q - \langle Tb_Q \rangle_R^{\mu}|^2\ud\mu \le C\mu(2R)$,
where $\langle Tb_Q \rangle^{\mu}_R = \mu(R)^{-1} \int_R Tb_Q \ud \mu$). In \cite{LM:CZO} one of us and Lacey proved a non-homogeneous local $Tb$ theorem
in the case that $b_Q, 1_QTb_Q \in L^{2}(\mu)$ (even for SIOs $T$ that are not necessarily antisymmetric). The state-of-the-art in the antisymmetric case is the already mentioned paper \cite{MMT}.

\subsection*{Square functions vs SIOs}
It is of interest to consider local $Tb$ theorems also for \emph{square functions (SFs)} $V$ that are introduced below. Such theorems have been applied e.g. in multiple Kato square root papers -- see for instance \cite{AHLMT}. See also Example \ref{ex:SFGEOM} below.
For us, however, SFs mainly just offer a simpler platform compared to SIOs, and allows us to present the full technical execution of our method, including the big piece $Tb$ type argument discussed in more detail below. A key technical difference between SFs $V$ and SIOs $T$ is that
for SIOs our testing conditions need to a priori involve $1_Q T_*b_Q$ (instead of $1_Q Tb_Q$), where $T_*$ is the maximally truncated SIO
\begin{displaymath}
T_*f(x) = \sup_{\epsilon > 0} |T_{\epsilon}f(x)|, \qquad T_{\epsilon}f(x) = \int_{|x-y| > \epsilon} K(x,y)f(y)\,d\mu(y).
\end{displaymath}
New local $Tb$ theorems for antisymmetric Calder\'on--Zygmund operators with testing conditions involving $1_QT_*b_Q$ follow at once from the ideas of this paper, and we explicitly state these in Section \ref{sec:CZO}.
To pass from such results to the original Hofmann's problem (with testing conditions involving $1_QTb_Q$ as in \eqref{eq:Hofmann})
a so-called adapted Cotlar's inequality is needed:
one needs to use the existence of the test functions to prove a Cotlar type inequality, i.e., a pointwise control of $T_*$ by something involving only $T$. A fancy version of an adapted Cotlar's inequality is presented in the follow-up paper \cite{MMT} (for previous
version see also \cite{HN}). This step causes the fact that in Hofmann's problem we can only allow $p \in (2-\epsilon, 2]$ (that is, we cannot
always go all the way down to $p > 1$ in \cite{MMT}).

Results involving $V$ and $T_*$, however, behave in completely analogous ways. In such formulations we can allow
$L^p(\mu)$-admissible test functions with any $p > 1$, and, as we shall soon see, we can do quite a bit more -- we can e.g. even use
test measures with weaker conditions than this. Moreover, the testing conditions need not be as in Hofmann's problem \eqref{eq:Hofmann}.
In fact, we can have $1_QVb_Q$ (or $1_QT_*b_Q$) in any $L^{s, \infty}$, $s > 0$. That is, the given regularity of the test function (the exponent $p$)
need not be reflected in the testing condition at all (we can use any $s > 0$) -- the conditions decouple.

We now define the SFs that we use.
Let $m, \alpha > 0$, and assume that we have kernels $s_{t}\colon \R^n \times \R^n \to \C$, $t > 0$, satisfying the size condition
\begin{equation}\label{eq:size}
|s_t(x,y)| \lesssim \frac{t^{\alpha}}{(t+|x-y|)^{m+\alpha}},
\end{equation}
the $y$-H\"older condition
\begin{equation}\label{eq:holy}
|s_t(x,y) - s_t(x,z)| \lesssim \frac{|y-z|^{\alpha}}{(t+|x-y|)^{m+\alpha}}
\end{equation}
whenever $|y-z| < t/2$, and the $x$-H\"older condition
\begin{equation}\label{eq:holx}
|s_t(x,y) - s_t(z,y)| \lesssim \frac{|x-z|^{\alpha}}{(t+|x-y|)^{m+\alpha}}
\end{equation}
whenever $|x-z| < t/2$.
Let $M(\R^n)$ denote the vector-space of all complex Borel measures in $\R^n$. The variation measure
of $\nu \in M(\R^n)$ is denoted by $|\nu|$ and the total variation is $\|\nu\| = |\nu|(\R^n)$.
For a given complex measure $\nu$ we define
\begin{displaymath}
\theta_t \nu(x) = \int s_t(x,y) \,d\nu(y), \qquad x \in \R^n.
\end{displaymath}
The vertical square function $V$ is defined by
\begin{displaymath}
V\nu(x) = \Big( \int_0^{\infty} |\theta_t \nu(x)|^2\,\frac{dt}{t} \Big)^{1/2}, \qquad x \in \R^n.
\end{displaymath}
Given a cube $Q \subset \R^n$ define also the truncated version
\begin{displaymath}
V_{Q}\nu(x) = \Big( \int_0^{\ell(Q)} |\theta_t \nu(x)|^2\,\frac{dt}{t} \Big)^{1/2},
\end{displaymath}
where $\ell(Q)$ denotes the side length of $Q$.

We say that a (positive) Radon measure $\mu$ in $\R^n$ is of order $m$, if $\mu(B(x,r)) \lesssim r^m$ for all $x \in \R^n$ and $r > 0$.
For $f \in \bigcup_{p \in [1,\infty]} L^p(\mu)$ and $x \in \R^n$ we set
\begin{displaymath}
\theta_t^{\mu} f(x) := \theta_t(f\,d\mu) = \int s_t(x,y) f(y)\,d\mu(y)
\end{displaymath}
and
\begin{displaymath}
V_{\mu}f(x) := V(f\,d\mu)(x) = \Big( \int_0^{\infty} |\theta_t^{\mu} f(x)|^2\,\frac{dt}{t} \Big)^{1/2}.
\end{displaymath}
Define also $V_{\mu, Q}f = V_Q(f\,d\mu)$. The above definitions make sense also when $\mu$ is finite (and not necessarily of order $m$).

\begin{exmp}\label{ex:SFGEOM}
Mayboroda--Volberg \cite{MV}, Chousionis, Garnett, Le and Tolsa \cite{CGLT} and others
have linked boundedness of square functions and geometry.
Let $E \subset \R^n$ be a closed set which is $m$-ADR for some integer $0 < m < n$, i.e., $\mu := \mathcal{H}^m |_E$ satisfies $\mu(B(x,r)) \sim r^m$
for all $x \in E$ and $r \in (0, \textup{diam}(E))$.
Let $s_t(x,y)=t\partial_t [t^{-m}\phi ([x-y]/t)]$, where $\phi(x)=(1+|x|^2)^{-(m+1)/2}$. This kernel satisfies  \eqref{eq:size}, \eqref{eq:holy} and \eqref{eq:holx}. One of the results of \cite{CGLT} says that $E$ is uniformly $m$-rectifiable (for the definition see \cite{CGLT}) if and only if $V_{\mu}$ is $L^2(\mu)$ bounded.
\end{exmp}

\subsection*{Description of the new method and applications}
In the first step of the method the idea is to fix a cube $Q$ and prove that there exists
$G_Q \subset Q$ so that $\mu(G_Q) \gtrsim \mu(Q)$ and $\| 1_{G_Q} V_{\mu} f\|_{L^2(\mu)} \lesssim \|f\|_{L^2(\mu)}$
for every $f \in L^2(\mu)$ satisfying that spt$\,f \subset G_Q$.
This relies on the big pieces type $Tb$ theorem of Nazarov, Treil and Volberg \cite{NTV:Vit} -- see also Volberg's book \cite{Vo} and Tolsa's book \cite{ToBook} for expositions of this theorem. In the appendix we also formulate and give a complete proof of the big piece $Tb$ theorem in the SF situation, which is much more approachable than the difficult one in the SIO context (compare e.g. to the book of Tolsa \cite{ToBook}, pages 137--194). We hope that our proof helps to make this important theorem more approachable. In our rough local $Tb$ setting this theorem certainly cannot be applied directly -- it e.g. requires a bounded test function.
The idea is to first perform a change of measure to $\sigma := |b_Q|\,d\mu$ and aim to apply the  big piece $Tb$ theorem \ref{thm:Tb}
with the measure $\sigma$ and the $L^{\infty}$-function $b_Q/|b_Q|$. In essence, we make the measure worse but the function a lot better.
In order to pass from the $\mu$ measure to the $\sigma$ measure, and then back, we use stopping time arguments.

The non-homogeneous good lambda method of Tolsa \cite{ToBook} -- see Theorem \ref{thm:goodlambda} below -- says that
the existence of such a big piece $G_Q$ in every nice cube (doubling and of small boundary) is enough to guarantee
the global $L^2(\mu)$ boundedness of $V_{\mu}$. It is extremely convenient that the good lambda method is so flexible -- this is the reason
why we can assume the existence of $b_Q$ only in nice cubes.

This general method is the main contribution of this article. Previously one of the key difficulties in proving non-homogeneous local $Tb$ theorems
with rough test functions was that it was difficult to prove the boundedness of certain $b_Q$-adapted (or twisted) martingale transforms (see Lacey--Martikainen \cite{LM:CZO}). We do not need such transformations in our new strategy, and this is highly convenient.

We compare our method to that of Hyt\"onen--Nazarov \cite{HN}. Their proof works for doubling measures
and it does not appear to be clear how to extend their method to non-homogeneous measures.
Moreover, they do not use the big pieces global $Tb$  or the the good lambda method. Instead, they write a rough test function as a sum of the good and bad part using the Calder\'on--Zygmund decomposition. 
The good part is a non-degenerate $L^{\infty}$ function which they want to use as a test function in some simpler $Tb$ theorem. To transfer their testing conditions involving $Tb_Q$ to this new bounded function they have to perform stopping times in a delicate order, and suppress their operator appropriately in the bad set. Thus, their proof is very different from our strategy. In \cite{MMT} we prove a deeper instance of Cotlar's inequality than in \cite{HN}, and then combine this with our local $Tb$ theorems involving $T_*$ (Proposition \ref{prop:TstarmainProp} below).
This combination then allows us to drop their buffer assumption even for some $p < 2$, and gives a proof that works in the non-homogeneous situation.

Besides the best known local $Tb$ theorem for SFs and maximally truncated SIOs $T_*$ presented in this paper and the best known result concerning Hofmann's problem \cite{MMT}, we have
also applied this new method in \cite{MMV}, where the flexibility of the method is helpful in handling issues involving metric space
arguments. Lastly, we mention that in \cite{MV} we applied the ideas of the current paper and \cite{MMT} to prove
new results in the bilinear setting.

\subsection*{Formulation of the new local $Tb$ theorem for square functions}
A cube $Q \subset \R^n$ is called $(\alpha,\beta)$-doubling for a given measure $\mu$ if $\mu(\alpha Q) \le \beta\mu(Q)$.
Given $t > 0$ we say that a cube $Q \subset \R^n$ has $t$-small boundary with respect to the measure $\mu$ if
\begin{displaymath}
\mu(\{x \in 2Q\colon\, \dist(x,\partial Q) \le \lambda\ell(Q)\}) \le t\lambda\mu(2Q)
\end{displaymath}
for every $\lambda > 0$.
The following theorem presents the best known local $Tb$ theorem for SFs.
\begin{thm}\label{thm:main}
Let $\mu$ be a measure of order $m$ in $\R^n$ and $B_1, B_2 < \infty$, $\epsilon_0 \in (0,1)$ be given constants.
Let $\beta > 0$ and $C_1$ be large enough (depending only on $n$). Suppose that for every
$(2,\beta)$-doubling cube $Q \subset \R^n$ with $C_1$-small boundary there exists a complex measure $\nu_Q$ so that
\begin{enumerate}
\item spt$\,\nu_Q \subset Q$;
\item $\mu(Q) = \nu_Q(Q)$;
\item $\|\nu_Q\| \le B_1 \mu(Q)$;
\item For all Borel sets $A \subset Q$ satisfying $\mu(A) \le \epsilon_0 \mu(Q)$ we have
$$
|\nu_Q|(A) \le \frac{\|\nu_Q\|}{32B_1}.
$$
\end{enumerate}
Suppose there exist $s > 0$ and for all $Q$ as above a Borel set $U_Q \subset \R^n$
such that $|\nu_Q|(U_Q) \le \frac{\|\nu_Q\|}{16B_1}$ and
\begin{displaymath}
\sup_{\lambda > 0} \lambda^s \mu(\{x \in Q \setminus U_Q\colon\, V_{Q}\nu_Q(x) > \lambda\}) \le B_2\|\nu_Q\|.
\end{displaymath}
Then $V_{\mu}\colon L^p(\mu) \to L^p(\mu)$ for every $p \in (1,\infty)$.
\end{thm}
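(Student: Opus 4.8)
The plan is to combine a non-homogeneous good-$\lambda$ reduction with a change of measure. Since $V_\mu$ may be viewed as the $L^2((0,\infty),\tfrac{dt}{t})$-valued singular integral $f\mapsto(\theta_t^\mu f)_{t>0}$, whose operator-valued kernel satisfies Hörmander-type conditions by \eqref{eq:size}--\eqref{eq:holx}, the $L^p(\mu)$ bounds for all $p\in(1,\infty)$ will follow from the single estimate $V_\mu\colon L^2(\mu)\to L^2(\mu)$ by the non-homogeneous Calder\'on--Zygmund theory for such operators; so, after a routine truncation making $\mu$ finite and compactly supported, we concentrate on the $L^2$ bound. For this we shall invoke a variant of Theorem~2.22 in \cite{ToBook}: with $\beta$ and $C_1$ as in the statement, it suffices to produce, for every $(2,\beta)$-doubling cube $Q$ with $C_1$-small boundary, a set $G_Q\subset Q$ with $\mu(G_Q)\ge\theta\mu(Q)$ for some absolute $\theta>0$, on which $V$ is $L^2(\mu)$-bounded with constant independent of $Q$. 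Everything that follows takes place inside such a fixed $Q$.

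We first change measure. Write the polar decomposition $\nu_Q=b\,\sigma$ with $\sigma:=|\nu_Q|$ and $|b|=1$ $\sigma$-a.e.; then $\theta_t\nu_Q=\theta_t^\sigma b$ and $V_Q\nu_Q=V_{\sigma,Q}b$, while (2)--(3) give $\sigma(Q)=\|\nu_Q\|\le B_1\mu(Q)$ and $|\int_Q b\,d\sigma|=|\nu_Q(Q)|=\mu(Q)\ge B_1^{-1}\sigma(Q)$, i.e.\ $b$ is accretive on $Q$ (note $B_1\ge1$ automatically). Next set $\lambda_0:=(B_1B_2/\epsilon_0)^{1/s}$ and
$$
F_0:=\{x\in Q\setminus U_Q:\ V_{\sigma,Q}b(x)\le\lambda_0\}.
$$
By the weak-type testing hypothesis and (3), $\mu(\{x\in Q\setminus U_Q:V_Q\nu_Q(x)>\lambda_0\})\le B_2\|\nu_Q\|\lambda_0^{-s}\le\epsilon_0\mu(Q)$, so hypothesis (4) bounds the $\sigma$-measure of that set by $\|\nu_Q\|/(32B_1)$; since also $\sigma(U_Q)=|\nu_Q|(U_Q)\le\|\nu_Q\|/(16B_1)$, we get $\sigma(F_0)\ge\tfrac{29}{32}\sigma(Q)$. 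Thus the quantitative absolute continuity and the weak testing conspire to give a big $\sigma$-piece of $Q$ on which the truncated square function $V_{\sigma,Q}b$ of the test function is pointwise $\le\lambda_0$.

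Now $(\sigma,b,F_0)$ satisfies exactly the hypotheses of the big pieces global $Tb$ theorem of Nazarov--Treil--Volberg for the square function $V$: $|b|\equiv1$, $b$ accretive on $Q$, and $V_{\sigma,Q}b\le\lambda_0$ on a big $\sigma$-piece. Running that argument should yield a further set $G'_Q\subset F_0$ with $\sigma(G'_Q)\ge c\,\sigma(Q)$ on which $V$ is bounded on $L^2(\sigma)$, with constant depending only on $B_1,B_2,\epsilon_0,s,m,n$. I expect this to be the main obstacle: the test measure $\sigma=|\nu_Q|$ carries no a priori growth of order $m$ (hypothesis (4) permits, for instance, atoms of $\sigma$), so the $Tb$ machinery must be run with a genuinely bad measure. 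This is to be handled by an internal stopping time, stopping at the maximal dyadic $R\subset Q$ with $\sigma(R)>(B_1/\epsilon_0)\mu(R)$: these cubes collect $\le\epsilon_0\mu(Q)$ of $\mu$-mass, hence by (4) at most $\|\nu_Q\|/(32B_1)$ of $\sigma$-mass, and on the complementary tree $\sigma$ inherits the $m$-growth of $\mu$; carrying this (together with the suppressed-kernel device) through the square-function $Tb$ proof, uniformly in $Q$, is the technical heart.

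It remains to transfer the conclusion back to $\mu$. By (2)--(4) the measures $\mu$ and $\sigma$ are quantitatively mutually absolutely continuous on $Q$, and a stopping-time argument on a dyadic lattice adapted to $Q$ (stopping as soon as the ratio of the $\sigma$- and $\mu$-averages leaves a fixed corridor around $\mu(Q)/\sigma(Q)$) produces $G_Q\subset G'_Q$ with $\mu(G_Q)\ge\theta\mu(Q)$ and $d(\mu\rest G_Q)\sim d(\sigma\rest G_Q)$. For $f$ supported in $G_Q$ one then writes $V(1_{G_Q}f\,d\mu)=V\big((1_{G_Q}f\,\tfrac{d\mu}{d\sigma})\,d\sigma\big)$ and uses the comparability of the two norms to deduce the $L^2(\mu\rest G_Q)$ bound of $V$ from the $L^2(\sigma\rest G'_Q)$ bound of the previous step; the passages between the truncated operator $V_Q$ and the full $V$ cost only errors controlled by the size estimate \eqref{eq:size}. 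Feeding $G_Q$ into the good-$\lambda$ theorem of the first paragraph then gives $V_\mu\colon L^2(\mu)\to L^2(\mu)$, and hence $V_\mu\colon L^p(\mu)\to L^p(\mu)$ for all $p\in(1,\infty)$, completing the proof.
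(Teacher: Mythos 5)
Your proposal follows the same strategy as the paper's proof: a good-$\lambda$ reduction to a single $(2,\beta)$-doubling cube with small boundary, change of measure to $\sigma=|\nu_Q|$ with polar decomposition $\nu_Q=b\,d\sigma$, application of a big-pieces global $Tb$ theorem to $(\sigma,b)$ after carving out exceptional sets by stopping times, and transfer of the $L^2(\sigma)$ bound back to $L^2(\mu)$ via the Radon--Nikodym derivative $\varphi=d\sigma/d\mu\sim 1$ on the surviving set. Your threshold set $F_0$, your accretivity appeal, and your two-sided density-ratio stopping correspond (after minor reorganization) to the paper's suppression set $S_0$, the sets $T_w$, and the sets $H_1=\bigcup_{R\in\mathcal F_1\cup\mathcal F_2}R$. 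One device is off, however: you propose to secure the $m$-growth of $\sigma$ purely by a dyadic stopping on $\sigma(R)/\mu(R)$, claiming that ``on the complementary tree $\sigma$ inherits the $m$-growth of $\mu$.'' This gives $\sigma(R)\lesssim\ell(R)^m$ only for dyadic cubes in the surviving tree, while the hypothesis actually needed in the big-pieces $Tb$ (Theorem \ref{thm:Tb}) is that \emph{every ball} $B_r$ with $\sigma(B_r)>C_0 r^m$ be \emph{entirely} contained in the exceptional set $H$. A ball that is not inside any stopping cube can still carry large $\sigma$-mass coming from portions of stopping cubes it intersects, so the dyadic stopping alone cannot produce this. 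The paper handles this with a separate set $H_2$, built from the sub-level set of the radial maximal function $M_{R,m}\nu$ (which is bounded $M(\R^n)\to L^{1,\infty}(\mu)$) and then enlarged to a union of balls $B(x,r(x))$, precisely so that every bad ball is swallowed by $H_2$. Apart from this one point, the argument you outline is the paper's argument.
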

The following points are aimed to clarify the technical aspects of this theorem.
\begin{itemize}
\item If $V_{\mu} \colon L^2(\mu) \to L^2(\mu)$ boundedly, then $V\colon M(\R^n) \to L^{1,\infty}(\mu)$ boundedly. 
In this non-homogeneous setting this is (for SIOs) a result of Nazarov--Treil--Volberg \cite{NTV:IMRN} -- for another proof see \cite{ToBook}. 
It follows that given $\nu_Q$ like above
one has to have
$$
\sup_{\lambda > 0} \lambda \mu(\{x \in Q\colon\, V_{Q}\nu_Q(x) > \lambda\}) \le \sup_{\lambda > 0} \lambda \mu(\{x\colon\, V\nu_Q(x) > \lambda\}) \le C\|\nu_Q\|.
$$
This makes the assumptions necessary.
\item Suppose $q \in (1,\infty)$ and that $b_Q$ is an $L^q(\mu)$-admissible test function with a constant $B_1$. Then $\nu_Q := b_Q\,d\mu$ is a testing measure
as in the above theorem -- i.e., it satisfies (1)-(4). Here $q=1$ is enough for (3) but $q > 1$ can be used to get (4):
$$
\int_A |b_Q|\ud\mu \le \mu(A)^{1/q'} B_1 \mu(Q)^{1/q} \le \epsilon_0^{1/q'} B_1 \int_Q b_Q\ud \mu
\le \frac{1}{32B_1} \int_Q |b_Q|\ud\mu
$$
if $\epsilon_0 := (32B_1^2)^{-q'}$. In general, it is enough to prove (4) by some method.
\item It is to be expected that the fact that the cubes are doubling and of small boundary makes the theorem significantly easier to apply with non-homogeneous measures than the previously known theorems. We are not aware how to modify the strategies in \cite{LM:CZO}, \cite{LM:SF} and \cite{MM2} to yield this generality.
\item The fact that we can allow a small exceptional set $U_Q$ allows some additional flexibility in technical arguments, and this is in fact exploited in \cite{MMT}.
\end{itemize}
In the previously known state-of-the-art theorems for SFs the assumptions have been of the form that $q \in (1,\infty)$, for each cube $Q$ we are given an
$L^q(\mu)$-admissible test function $b_Q$ and the testing condition holds with the same $q$:
$$
\int_Q |V_{\mu, Q} b_Q|^q\ud \mu \lesssim \mu(Q).
$$
The Lebesgue case is by Hofmann \cite{Ho} and the non-doubling case by two of us \cite{MM2}. Previous
results (the case $q=2$) include \cite{AHLMT} and \cite{LM:SF}.

In Section \ref{s:local} we prove our main result for SFs and
present our method in detail. In Section \ref{sec:CZO} we discuss the analogous results for SIOs. In Appendix \ref{s:global} we give a relatively short proof of the big pieces global $Tb$ theorem for square functions, which we need to apply in Section \ref{s:local}.

\subsection*{Notation}
We write $A \lesssim B$, if there is a constant $C>0$ so that $A \leq C B$. We may also write $A \sim B$ if $B \lesssim A \lesssim B$.

We then set some dyadic notation. For cubes $Q$ and $R$ we denote
\begin{itemize} 
\item $\ell(Q)$ is the side-length of $Q$;
\item $d(Q,R)$ denotes the distance between the cubes $Q$ and $R$;
\item $D(Q,R):=d(Q,R)+\ell(Q)+\ell(R)$ is the long distance;
\item $W_Q=Q\times [\ell(Q)/2, \ell(Q))$ is the Whitney region associated with $Q$;
\item $\text{ch}(Q)$ denotes the dyadic children of $Q$;
\item $\mu \rest Q$ denotes the measure $\mu$ restricted to $Q$;
\item $\langle f \rangle_Q^{\mu} = \mu(Q)^{-1}\int_Q f\,d\mu$ (or just $\langle f \rangle_Q$ if the measure is clear from the context).
\end{itemize}

\subsection*{Acknowledgements}
Much of this research was carried out when H.M. was visiting CRM during September 2015, and he would like to thank the institution for its hospitality. We thank Benjamin Jaye for answering our question regarding a technical point in some $Tb$ theorems. We would also like to thank
the anonymous referee and the editor for comments that helped to clarify the exposition.

\section{The main method and proof of the local $Tb$ theorem for SFs}\label{s:local}
We record the following easy lemma.
\begin{lem}\label{lem:sup}
Let a cube $Q \subset \R^n$ be given and $G \subset Q$. Suppose also that $\nu(Q) \lesssim \ell(Q)^m$.
If $\|1_GV_{\nu, Q} f\|_{L^2(\nu)} \lesssim \|f\|_{L^2(\nu)}$ for every $f \in L^2(\nu)$ satisfying spt$\,f \subset G$, then
also $\|1_GV_{\nu} f\|_{L^2(\nu)} \lesssim \|f\|_{L^2(\nu)}$ for every $f \in L^2(\nu)$ satisfying spt$\,f \subset G$.
\end{lem}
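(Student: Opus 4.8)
The plan is to bound the "tail" part $\int_{\ell(Q)}^{\infty} |\theta_t\nu f(x)|^2\,\frac{dt}{t}$ for $x\in G\subset Q$ and $f$ supported on $G$, and to observe that this tail is controlled pointwise (not just in norm) by the total mass $\|f\|_{L^1(\nu)}$ times a harmless factor, so that the extra contribution is dominated by $\|f\|_{L^2(\nu)}$ after integrating over $G$ against $\nu$. Since $V_\nu f(x)^2 = V_{\nu,Q}f(x)^2 + \int_{\ell(Q)}^\infty |\theta_t^\nu f(x)|^2\,\frac{dt}{t}$, the hypothesis handles the first term and we only need the second.

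First I would estimate $|\theta_t^\nu f(x)|$ for $t\ge \ell(Q)$ and $x\in Q$. Using the size bound \eqref{eq:size}, for $y\in\supp f\subset Q$ we have $t+|x-y|\gtrsim t$ (indeed $|x-y|\le \diam Q\lesssim \ell(Q)\le t$), so
\begin{displaymath}
|\theta_t^\nu f(x)| \le \int |s_t(x,y)||f(y)|\,d|\nu|(y) \lesssim \frac{t^\alpha}{t^{m+\alpha}}\int_Q |f|\,d|\nu| = \frac{1}{t^m}\,\|f\|_{L^1(\nu)}.
\end{displaymath}
Squaring and integrating the Haar measure $\frac{dt}{t}$ over $t\in[\ell(Q),\infty)$ gives
\begin{displaymath}
\int_{\ell(Q)}^\infty |\theta_t^\nu f(x)|^2\,\frac{dt}{t} \lesssim \|f\|_{L^1(\nu)}^2 \int_{\ell(Q)}^\infty \frac{dt}{t^{2m+1}} \sim \frac{\|f\|_{L^1(\nu)}^2}{\ell(Q)^{2m}}.
\end{displaymath}
(Here I am using $m>0$, which is implicit in the setup.) Hence for every $x\in Q$,
\begin{displaymath}
V_\nu f(x)^2 \le V_{\nu,Q}f(x)^2 + C\,\frac{\|f\|_{L^1(\nu)}^2}{\ell(Q)^{2m}}.
\end{displaymath}

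Next I would integrate this inequality over $G$ against $\nu$. The first term integrates to $\lesssim \|f\|_{L^2(\nu)}^2$ by hypothesis. For the second term, Cauchy--Schwarz gives $\|f\|_{L^1(\nu)}^2 \le \nu(G)\,\|f\|_{L^2(\nu)}^2 \le \nu(Q)\,\|f\|_{L^2(\nu)}^2$, while integrating the constant over $G$ picks up another factor $\nu(G)\le\nu(Q)$; so this term is $\lesssim \frac{\nu(Q)^2}{\ell(Q)^{2m}}\,\|f\|_{L^2(\nu)}^2$. Finally, the assumption $\nu(Q)\lesssim\ell(Q)^m$ makes $\frac{\nu(Q)^2}{\ell(Q)^{2m}}\lesssim 1$, so $\|1_G V_\nu f\|_{L^2(\nu)}^2 \lesssim \|f\|_{L^2(\nu)}^2$, as claimed. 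There is no real obstacle here; the only point to be careful about is making sure the geometric factors from the tail integral ($t^{-2m}$ at scale $\ell(Q)$) exactly cancel against the growth bound $\nu(Q)\lesssim\ell(Q)^m$, which is precisely why that hypothesis is included.
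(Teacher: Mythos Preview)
Your proof is correct and follows essentially the same approach as the paper. The paper records only the pointwise estimate
\[
\int_{\ell(Q)}^{\infty} |\theta_t^{\nu}f(x)|^2 \,\frac{dt}{t} \lesssim \nu(Q)\,\ell(Q)^{-2m}\,\|f\|_{L^2(\nu)}^2
\]
and leaves the rest implicit; your argument simply unpacks this by first bounding $|\theta_t^{\nu}f(x)|\lesssim t^{-m}\|f\|_{L^1(\nu)}$ via the size condition, integrating in $t$, and then applying Cauchy--Schwarz to pass from $\|f\|_{L^1(\nu)}^2$ to $\nu(Q)\|f\|_{L^2(\nu)}^2$, which is exactly how one would justify the paper's displayed inequality.
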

\begin{proof}
This follows from the pointwise estimate
\begin{displaymath}
\int_{\ell(Q)}^{\infty} |\theta_t^{\nu}f(x)|^2 \, \frac{dt}{t} \lesssim \nu(Q) \ell(Q)^{-2m} \|f\|_{L^2(\nu)}^2.
\end{displaymath}
\end{proof}
\begin{defn}\label{defn:random}
Given a cube $Q \subset \R^n$ we consider the following random dyadic grid.
For small notational convenience assume that $c_{Q} = 0$ (that is, $Q$ is centred at the origin). 
Let $N \in \Z$ be defined by the requirement $2^{N-3} \le \ell(Q) < 2^{N-2}$.
Consider the random square $Q^* = Q^*(w) = w + [-2^N,2^N)^n$, where
$w \in [-2^{N-1}, 2^{N-1})^n =: \Omega_N = \Omega$. The set $\Omega$ is equipped with the normalised Lebesgue measure $\mathbb{P}_N = \mathbb{P}$.
We define the grid $\mathcal{D}(w) := \mathcal{D}(Q^*(w))$. Notice that
$Q \subset \alpha Q^*(w)$ for some $\alpha < 1$, and $\ell(Q) \sim \ell(Q^*(w))$.
\end{defn}

Next, we prove the main Proposition.
\begin{prop}\label{prop:main}
Let $\mu$ be a measure of order $m$ and $B_1, B_2 < \infty$, $\epsilon_0 \in (0,1)$ be given constants. Let $Q \subset \R^n$ be a fixed cube. 
Assume that there exists a complex measure $\nu = \nu_Q$ such that
\begin{enumerate}
\item spt$\,\nu \subset Q$;
\item $\mu(Q) = \nu(Q)$;
\item $\|\nu\| \le B_1 \mu(Q)$;
\item For all Borel sets $A \subset Q$ satisfying $\mu(A) \le \epsilon_0 \mu(Q)$ we have
$$
|\nu|(A) \le \frac{\|\nu\|}{32B_1}.
$$
\end{enumerate}
Suppose there exist $s > 0$ and a Borel set $U_Q \subset \R^n$
for which $|\nu|(U_Q) \le \frac{\|\nu\|}{16B_1}$ so that
\begin{displaymath}
\sup_{\lambda > 0} \lambda^s \mu(\{x \in Q \setminus U_Q\colon\, V_{Q}\nu(x) > \lambda\}) \le B_2\|\nu\|.
\end{displaymath}
Then, there is some subset $G_Q \subset Q \setminus U_Q$ such that $\mu(G_Q) \gtrsim \mu(Q)$ and
\begin{displaymath}
\| 1_{G_Q} V_{\mu} f\|_{L^2(\mu)} \lesssim \|f\|_{L^2(\mu)}
\end{displaymath}
for every $f \in L^2(\mu)$ satisfying that spt$\,f \subset G_Q$.
\end{prop}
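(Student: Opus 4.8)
The plan is to make a change of measure and reduce to the "big pieces" global $Tb$ theorem for square functions proved in Section~\ref{s:global}. Write $\nu = \nu_Q$ and set $\sigma := |\nu|$. By the polar decomposition $\nu = b\,d\sigma$ with $|b| = 1$ $\sigma$-a.e. We want to produce a single large subset $G_Q$ of $Q\setminus U_Q$ on which we can run the global theorem for $V$ with respect to the measure $\sigma$ and the bounded function $b$. The first task is a stopping-time construction comparing $\mu$ and $\sigma$: starting from $Q$, select maximal dyadic (in a grid from Definition~\ref{defn:random}, so that $Q$ sits nicely inside a grid cube) subcubes $R$ where the density of $\sigma$ relative to $\mu$ deviates too much, i.e. either $\sigma(R) > C\mu(R)$ or $\mu(R) > C'\sigma(R)$ (the two inequalities being calibrated to $B_1$ and the constant in~(3)). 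Assumptions (2), (3) and (4) are exactly what is needed to show that the total $\mu$-mass of the stopping cubes is at most a small fraction of $\mu(Q)$: (4) prevents $\sigma$ from concentrating where $\mu$ is small, and (2)–(3) pin the overall ratio $\sigma(Q)/\mu(Q) = \|\nu\|/\mu(Q) \in [1, B_1]$. Removing the stopping cubes together with $U_Q$, we obtain a set $G_Q \subset Q \setminus U_Q$ with $\mu(G_Q) \gtrsim \mu(Q)$ on which $\mu$ and $\sigma$ are comparable, and (crucially) $\sigma(G_Q) \gtrsim \sigma(Q) = \|\nu\|$.

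Next I would transfer the weak-type testing hypothesis from $\mu$ to $\sigma$. The hypothesis gives $\sup_\lambda \lambda^s \mu(\{x\in Q\setminus U_Q: V_Q\nu(x) > \lambda\}) \le B_2\|\nu\|$. On $G_Q$, where $\mu \lesssim \sigma$, this upgrades to a weak-$(s)$ bound for $V_Q\nu$ with respect to $\sigma$ on $G_Q$; combining with Lemma~\ref{lem:sup} (applicable since $\sigma(Q) = \|\nu\| \le B_1\mu(Q) \lesssim \ell(Q)^m$ because $\mu$ has order $m$) we may pass from $V_Q$ to $V$. Since $\nu = b\,d\sigma$ and $|b| = 1$, this says precisely that $V_\sigma b = V(b\,d\sigma)$ satisfies a weak-type testing estimate $\sup_\lambda \lambda^s \sigma(\{x \in G_Q: V_\sigma b(x) > \lambda\}) \lesssim \sigma(G_Q)$, i.e. $b$ is a legitimate accreting ($\int_{G_Q} b\,d\sigma = \nu(G_Q)$, which after discarding the small stopping/exceptional mass is still comparable to $\sigma(G_Q)$ in the accretive sense, up to a harmless further refinement of $G_Q$) $L^\infty$ test function for $V$ on $(G_Q,\sigma)$ with weak-type control on the operator side. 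This is exactly the input required by the global "big pieces" $Tb$ theorem for square functions.

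Applying that global theorem (the square-function analogue of Theorem~5.1 in~\cite{ToBook}, proved in Section~\ref{s:global}) with the measure $\sigma\rest G_Q$, the $L^\infty$ function $b$, and the weak-type testing just established, we conclude that there is a further subset $G_Q' \subset G_Q$ with $\sigma(G_Q') \gtrsim \sigma(G_Q) \gtrsim \sigma(Q)$, hence also $\mu(G_Q') \gtrsim \mu(Q)$ by comparability, such that $\|1_{G_Q'} V_\sigma g\|_{L^2(\sigma)} \lesssim \|g\|_{L^2(\sigma)}$ for $g$ supported in $G_Q'$. Finally I would undo the change of measure: for $f$ supported in $G_Q'$ with $f \in L^2(\mu)$, write $V_\mu f = V(f\,d\mu) = V((f\,d\mu/d\sigma)\,d\sigma) = V_\sigma g$ with $g = f\,d\mu/d\sigma$; since $d\mu/d\sigma \sim 1$ on $G_Q'$ the passage $L^2(\mu) \leftrightarrow L^2(\sigma)$ is bounded both ways, and rename $G_Q'$ as $G_Q$. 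This yields $\|1_{G_Q} V_\mu f\|_{L^2(\mu)} \lesssim \|f\|_{L^2(\mu)}$ for $f$ supported in $G_Q$, which is the claim.

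The main obstacle is the stopping-time comparison of $\mu$ and $\sigma$ in the first paragraph: one must choose the stopping thresholds so that the bad mass is genuinely small while simultaneously guaranteeing that $b$ remains non-degenerate on what survives (so the global theorem's accretivity hypothesis holds) and that the weak-type testing bound survives the restriction to $G_Q$ with only a bounded loss. The numerology in hypotheses (3) and (4) — the factors $32B_1$ and $16B_1$ — is tuned precisely for this balancing act, and getting the constants to line up so that all three requirements (small bad set, accretivity, testing transfer) hold at once is the delicate point; the change-of-measure identities and Lemma~\ref{lem:sup} are then routine.
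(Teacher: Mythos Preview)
Your overall architecture is right and matches the paper: polar decomposition $\nu=b\,d\sigma$ with $|b|=1$, a stopping time comparing $\sigma$ and $\mu$ to get a set where $\varphi=d\sigma/d\mu\sim 1$, transfer of the weak testing from $\mu$ to $\sigma$, application of the big pieces global $Tb$ (Theorem~\ref{thm:Tb}), and undoing the change of measure at the end. However, there is one genuine gap.

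Theorem~\ref{thm:Tb} requires, as part of the data $H$, that every ball $B_r$ with $\sigma(B_r)>C_0r^m$ satisfies $B_r\subset H$; this is what replaces the growth hypothesis on $\sigma$ inside the proof of the global theorem. You never arrange this. Your fix is to apply the global theorem with the restricted measure $\sigma\rest G_Q$, which is indeed of order $m$ thanks to $\varphi\sim 1$ and $\mu$ being of order $m$. But then the operator in the testing hypothesis becomes $V_{\sigma\rest G_Q,\,Q}b=V_Q(1_{G_Q}\,d\nu)$, \emph{not} $V_{\sigma,Q}b=V_Q\nu$. The weak $(1,s)$ assumption you started with is for $V_Q\nu$, and there is no way to pass from that to control of $V_Q(1_{G_Q}\,d\nu)$ without already knowing boundedness of the operator. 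So restricting $\sigma$ fixes the growth but destroys the testing.

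The paper resolves this by applying Theorem~\ref{thm:Tb} with the \emph{full} measure $\sigma$ and enlarging the exceptional set to $H=H_1\cup H_2\cup U_Q$, where $H_1$ is your $\mu$--$\sigma$ comparison stopping and $H_2$ is a second exceptional set built exactly to capture the growth: set $p(x)=\sup_{r>0}\sigma(B(x,r))/r^m=M_{R,m}\nu(x)$ and let $H_2=\bigcup_{p(x)>p_0}B(x,r(x))$ with $r(x)$ the largest bad radius. The weak $(1,1)$ bound $M_{R,m}\colon M(\R^n)\to L^{1,\infty}(\mu)$ together with assumption~(4) forces $\sigma(H_2)$ to be a small fraction of $\sigma(Q)$, and by construction every ball with $\sigma(B_r)>p_0r^m$ lies in $H_2$. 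With this $H$ all three hypotheses of Theorem~\ref{thm:Tb} hold for the unrestricted $\sigma$, and the testing input is exactly $V_{\sigma,Q}b=V_Q\nu$. A minor point: your invocation of Lemma~\ref{lem:sup} for the testing step is misplaced; that lemma is about $L^2$ bounds and is used only at the very end to pass from $V_{\mu,Q}$ to $V_\mu$.
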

\begin{proof}
We can assume that spt$\, \mu \subset Q$. Indeed, if we have proved the theorem for such measures, we can then apply it to $\mu\rest Q$.
Let us denote $\sigma = |\nu|$, where $|\nu|$ is the variation measure of $\nu$. Also, let us write the polar decomposition of the complex measure $\nu$
as $\nu = b\,d\sigma$, where $b$ is a function so that $|b(x)| = 1$ always.

The idea is to apply the big pieces global $Tb$ theorem from Appendix \ref{s:global} (Theorem \ref{thm:Tb}).
It will be applied to the measure $\sigma$ and the bounded function $b$. Using stopping times
we need to construct some exceptional sets so that the assumptions of that theorem are verified. Moreover, we need to
be able to come back to the $\mu$ measure -- this requires encompassing additional stopping times to the construction.

We fix $w$, and write $\mathcal{D}(w) = \calD$. We also write $\calD_0 = \calD(0)$.
Let $\mathcal{A} = \mathcal{A}_w$ consist of the maximal dyadic cubes $R \in \calD$ for which
\begin{displaymath}
\Big| \int_R b \,d\sigma \Big| < \eta \sigma(R),
\end{displaymath}
where $\eta := \frac{1}{2}B_1^{-1}$.
We set
\begin{displaymath}
T = T_w = \bigcup_{R \in \mathcal{A}} R \subset \R^n.
\end{displaymath} 
Notice that
\begin{align*}
\sigma(Q) = \|\nu\| \le B_1\mu(Q) = B_1 \nu(Q) =  B_1 \int_Q b \,d\sigma.
\end{align*}
Then estimate
\begin{displaymath}
\int_Q b \,d\sigma = \Big| \int_Q b \,d\sigma \Big| = \Big|  \int_{Q \setminus T} b \,d\sigma + \sum_{R \in \mathcal{A}} \int_R  b \,d\sigma \Big|
 \le \sigma(Q \setminus T) + \eta\sigma(Q).
\end{displaymath}
Since $\eta B_1 = 1/2$ we conclude that
\begin{displaymath}
\sigma(Q) \le B_1 \sigma(Q \setminus T)  + \frac{1}{2}\sigma(Q),
\end{displaymath}
and so
\begin{displaymath}
\sigma(Q) \le 2B_1 [\sigma(Q) - \sigma(T)].
\end{displaymath}
From here we can read that
\begin{displaymath}
\sigma(T) \le (1-\eta) \sigma(Q).
\end{displaymath}

Next, let $\mathcal{F}$ consist of the maximal dyadic cubes $R \in \calD_0$ for which
\begin{displaymath}
\sigma(R) > \frac{B_1}{\epsilon_0} \mu(R)
\end{displaymath}
or
\begin{displaymath}
\sigma(R) < \delta \mu(R),
\end{displaymath}
where $\delta := \eta/16 = \frac{1}{32}B_1^{-1}$.
Let $\mathcal{F}_1$ be the collection of maximal cubes $R \in \calD_0$ satisfying the first condition, and define $\mathcal{F}_2$ analogously.
Note that
\begin{displaymath}
\mu\Big( \bigcup_{R \in \mathcal{F}_1} R \Big) \le \epsilon_0\mu(Q),
\end{displaymath}
so that we have by assumption (4) that
\begin{align*}
\sigma\Big( \bigcup_{R \in \mathcal{F}_1} R \Big) \le \frac{1}{32B_1} \sigma(Q) = \delta\sigma(Q).
\end{align*}
Finally, we record that
\begin{displaymath}
\sigma\Big( \bigcup_{R \in \mathcal{F}_2} R \Big) = \sum_{R \in \mathcal{F}_2} \sigma(R) \le \delta \sum_{R \in \mathcal{F}_2} \mu(R) = \delta\mu\Big( \bigcup_{R \in \mathcal{F}_2} R \Big)
\le \delta \mu(Q) \le \delta\sigma(Q).
\end{displaymath}
We may conclude that the set
\begin{displaymath}
H_1 = \bigcup_{R \in \mathcal{F}} R
\end{displaymath}
satisfies  $\sigma(H_1) \le 2\delta\sigma(Q) = \frac{\eta}{8}\sigma(Q)$.

We now record the important property of the exceptional set $H_1$. Let $x \in Q \setminus H_1$.
For any $R \in \calD_0$ satisfying that $x \in R$ we have that
\begin{align*}
\frac{1}{32B_1} = \delta \le \frac{\sigma(R)}{\mu(R)} \le \frac{B_1}{\epsilon_0}.
\end{align*}
From this we can conclude (using a dyadic variant of Lemma 2.13 of \cite{MaBook}) that for all Borel sets $A \subset \R^n$ there holds that
$$
\delta \mu(A \cap (Q \setminus H_1)) \le \sigma(A \cap (Q \setminus H_1)) \le \frac{B_1}{\epsilon_0} \mu(A \cap (Q\setminus H_1)).
$$
In particular, we have that $\sigma \rest (Q \setminus H_1) \ll \mu \rest (Q \setminus H_1)$. Using Radon--Nikodym theorem we let $\varphi \ge 0$ be a function so that
$$
\sigma(A) = \int_A \varphi \,d\mu
$$
for all Borel sets $A \subset Q \setminus H_1$. We obviously have that $\varphi \sim 1$ for $\mu$-a.e. $x \in Q \setminus H_1$.

We need another exceptional set $H_2$. To this end, let
\begin{displaymath}
p(x) := M_{R, m} \nu(x) =  \sup_{r > 0} \frac{\sigma(B(x,r))}{r^m}.
\end{displaymath}
For $p_0 > 0$ let $E_{p_0} = \{p \ge p_0\}$. Using that $M_{R, m} \colon M(\R^n) \to L^{1,\infty}(\mu)$ boundedly we see that
\begin{align*}
\mu(E_{p_0}) = \mu(\{M_{R, m} \nu \ge p_0\}) \le \frac{C}{p_0} \|\nu\| \le \frac{CB_1}{p_0}\mu(Q).
\end{align*}
We fix $p_0 \lesssim 1$ so large that $\mu(E_{p_0/2^m}) \le \epsilon_0\mu(Q)$, so that in particular
$\sigma(E_{p_0/2^m}) \le \frac{\eta}{8}\sigma(Q)$.
For $x \in \{p > p_0\}$ define
\begin{displaymath}
r(x) = \sup\{r > 0\colon \sigma(B(x,r)) > p_0r^m\},
\end{displaymath}
and then set
\begin{displaymath}
H_2 := \bigcup_{x \in \{p > p_0\}} B(x,r(x)).
\end{displaymath}
It is clear that every ball $B_r$ with $\sigma(B_r) > p_0r^m$ satisfies $B_r \subset H_2$.
Notice that if $y \in H_2$, then there is $x \in \{p > p_0\}$ so that $y \in B(x,r(x))$, and so
$\sigma(B(y, 2r(x)) \ge \sigma(B(x,r(x)) \ge p_0r(x)^m = p_02^{-m}[2r(x)]^m$. We conclude that
$H_2 \subset E_{p_0/2^m}$, and so $\sigma(H_2) \le \frac{\eta}{8}\sigma(Q)$. 


The assumption about the set $U_Q$ reads $\sigma(U_Q) \le \frac{\eta}{8}\sigma(Q)$.
Define now $H = H_1 \cup H_2 \cup U_Q$. The properties of $H$ are as follows:
\begin{enumerate}
\item We have $\sigma(H) \le \frac{\eta}{2}\sigma(Q)$, and so
$\sigma(H \cup T_w) \le (1-\eta/2)\sigma(Q) = \tau_1 \sigma(Q)$, $\tau_1 < 1$.
\item If $\sigma(B_r) > p_0r^m$, then $B_r \subset H$.
\item We have a function $\varphi$ so that
$$
\sigma(A) = \int_A \varphi\,d\mu
$$
for all Borel sets $A \subset Q \setminus H$, and $\varphi \sim 1$ for $\mu$-a.e. $x \in Q \setminus H$.
\end{enumerate}
We also have for every $\lambda > 0$ that
\begin{align*}
\lambda^s & \sigma(\{x \in Q \setminus H\colon\,  V_{\sigma, Q} b(x) > \lambda\}) \\
&=\lambda^s \sigma(\{x \in Q \setminus H\colon\,  V_{Q} \nu(x) > \lambda\}) \\
&\lesssim \lambda^s\mu(\{x \in Q \setminus U_Q\colon\,  V_{Q} \nu(x) > \lambda\}) \le B_2\|\nu\| = B_2\sigma(Q).
\end{align*}
Appealing to Theorem \ref{thm:Tb} with the measure $\sigma$ and the $L^{\infty}$ function $b$ we find $G_Q \subset Q \setminus H \subset Q \setminus U_Q$ so that
$\sigma(G_Q) \gtrsim \sigma(Q)$ and
\begin{equation}\label{eq:sigmab}
\|1_{G_Q} V_{\sigma, Q}f \|_{L^2(\sigma)} \lesssim \|f \|_{L^2(\sigma)}
\end{equation}
for every $f \in L^2(\sigma)$.

Suppose now that $g \in L^2(\mu)$ and spt$\,g \subset G_Q$. We apply Equation \eqref{eq:sigmab} with $f = g / \varphi$ (since
$G_Q \subset Q \setminus H$ we have $\varphi \sim 1$ $\mu$-a.e. on the support of $g$).
Notice that
\begin{displaymath}
\|1_{G_Q} V_{\sigma, Q}(g/\varphi) \|_{L^2(\sigma)} = \|1_{G_Q} V_{\mu, Q}g \|_{L^2(\sigma)} \gtrsim \|1_{G_Q} V_{\mu, Q}g \|_{L^2(\mu)}
\end{displaymath}
so that
\begin{displaymath}
\|1_{G_Q} V_{\mu, Q}g \|_{L^2(\mu)} \lesssim \|g/\varphi \|_{L^2(\sigma)} \lesssim \|g\|_{L^2(\mu)}.
\end{displaymath}
Applying Lemma \ref{lem:sup} we conclude that
\begin{displaymath}
\|1_{G_Q} V_{\mu}f \|_{L^2(\mu)} \lesssim \|f\|_{L^2(\mu)}
\end{displaymath}
for every $f \in L^2(\mu)$ satisfying that spt$\,f \subset G_Q$. Moreover, we have that
\begin{displaymath}
\mu(Q) \le \sigma(Q) \lesssim \sigma(G_Q) = \int_{G_Q} \varphi \,d\mu \lesssim \mu(G_Q).
\end{displaymath}
We are done.
\end{proof}

Let us now record the non-homogenous good lambda method of Tolsa. 
This is essentially Theorem 2.22 in \cite{ToBook}, where it is done for Cald\'eron-Zygmund operators and without the requirement that the cubes in the statement should have small boundaries. The modified version with small boundaries is recorded in \cite{MMT}. The adaptation to the square function setting is easy, and we omit it.
\begin{thm}\label{thm:goodlambda}
Let $\mu$ be a measure of order $m$ in $\R^n$.
Let $\beta>0$ and $C_1 > 0$ be big enough numbers, depending only on the dimension $n$, and assume $\theta \in (0,1)$. Suppose for each $(2,\beta)$-doubling cube $Q$
with $C_1$-small boundary
there exists a subset $G_Q \subset Q$  such that
$\mu(G_Q)\geq \theta \mu(Q)$ and $V\colon M(\R^n) \to L^{1,\infty}(\mu \rest G_Q)$ is bounded with a uniform constant independent of $Q$.
Then $V_{\mu}$ is bounded in $L^p(\mu)$ for all $1<p<\infty$ with a constant depending on $p$ and on the preceding constants.
\end{thm}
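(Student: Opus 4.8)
The plan is to follow Tolsa's proof of Theorem~2.22 in \cite{ToBook} essentially verbatim, replacing the Calder\'on--Zygmund operator by our square function $V$, and then to patch in the extra ``small boundary'' restriction. First I would reduce to proving a good lambda inequality of the form
$$
\mu(\{x : V_\mu f(x) > (1+\delta)\lambda,\ M_\mu f(x) \le \gamma \lambda\}) \le c(\gamma) \mu(\{x : V_\mu f(x) > \lambda\}),
$$
where $M_\mu$ is a suitable (non-homogeneous, maximal-function-type) operator known to be bounded on $L^p(\mu)$, $c(\gamma) \to 0$ as $\gamma \to 0$, and $\delta$ is a fixed small parameter; integrating this against $\lambda^{p-1}\,d\lambda$ in the standard way yields $\|V_\mu f\|_{L^p(\mu)} \lesssim \|f\|_{L^p(\mu)}$. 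To localize the left-hand set I would perform a Whitney-type decomposition of the open set $\Omega_\lambda = \{V_\mu f > \lambda\}$ into cubes, and within each Whitney cube replace it by a slightly larger $(2,\beta)$-doubling cube \emph{with $C_1$-small boundary}: such a cube exists because, starting from any cube, one can always find a doubling ancestor (by the order-$m$ condition on $\mu$ and a standard iteration), and one can additionally arrange the small-boundary property by choosing, among a fixed number of dilates with comparable side lengths, one whose boundary neighbourhoods carry little $\mu$-mass (an averaging/pigeonhole argument, as carried out in \cite{MMT}). On each such cube $Q$ we invoke the hypothesis to obtain $G_Q \subset Q$ with $\mu(G_Q) \ge \theta\mu(Q)$ on which $V$ maps $M(\R^n) \to L^{1,\infty}(\mu\rest G_Q)$ with a uniform bound.

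The heart of the matter is then the local estimate on a fixed good cube $Q$: one splits $f = f\mathbf 1_{cQ} + f\mathbf 1_{(cQ)^c}$ for an appropriate dilation constant $c$. For the far part, the size condition \eqref{eq:size} together with $\mu$ being of order $m$ gives a pointwise bound $V(f\mathbf 1_{(cQ)^c}\,d\mu)(x) - V(f\mathbf 1_{(cQ)^c}\,d\mu)(x_Q) \lesssim M_\mu f(x_Q)$ for $x \in Q$, so that on the portion of $Q$ where $M_\mu f$ is small and which meets the complement of $\Omega_\lambda$, the far part contributes at most $\gamma\lambda$ plus $\lambda$, i.e. it does not push $V_\mu f$ above $(1+\delta)\lambda$ once $\gamma,\delta$ are adjusted; here the truncation in $V_Q$ versus the full $V$ is handled exactly as in Lemma~\ref{lem:sup}. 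For the near part $f\mathbf 1_{cQ}$ one uses the weak $(1,1)$ bound for $V$ on $\mu\rest G_Q$: the set where $V(f\mathbf 1_{cQ}\,d\mu) > \delta\lambda$ inside $G_Q$ has $\mu$-measure at most $C(\delta\lambda)^{-1}\|f\mathbf 1_{cQ}\,d\mu\| \lesssim (\delta\lambda)^{-1}\mu(cQ)\,M_\mu f(x_Q) \lesssim (\gamma/\delta)\mu(cQ) \lesssim (\gamma/\delta)\mu(Q)$, using the doubling property of $Q$ to pass from $\mu(cQ)$ to $\mu(Q)$. Combining, the ``bad'' part of $Q$ (where $V_\mu f$ exceeds $(1+\delta)\lambda$ but $M_\mu f$ is small) is contained, up to a set of measure $\le (1-\theta)\mu(Q)$ coming from $Q\setminus G_Q$ and a set of measure $\lesssim (\gamma/\delta)\mu(Q)$ from the weak-type bound, inside $Q$; but one must also discard the portion of $Q$ lying outside $\Omega_\lambda$, and this is where small boundary enters — one needs that $\mu(Q \cap \partial\Omega_\lambda\text{-neighbourhood})$ is a small fraction of $\mu(Q)$, which the $C_1$-small boundary condition (with $C_1$ large) delivers, so that $Q$ genuinely sits almost entirely inside $\Omega_\lambda$ and the Whitney cubes tile $\Omega_\lambda$ efficiently. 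Summing over Whitney cubes and using that they have bounded overlap after the doubling/small-boundary enlargement gives the good lambda inequality with $c(\gamma) \approx (1-\theta) + C\gamma/\delta$, which can be made $< 1/2$ by first fixing $\delta$ small relative to $1-\theta$ being bounded away from $1$... more precisely one fixes $\delta$, then $\gamma$ small, and accepts $c(\gamma)$ close to $1-\theta < 1$, which is exactly what the good lambda machinery needs since $\theta > 0$.

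The main obstacle I expect is not the square-function adaptation (which is routine given \eqref{eq:size}--\eqref{eq:holx} and is explicitly asserted to ``extend easily'') but the bookkeeping forced by the small-boundary requirement: one must simultaneously arrange that the enlarged cubes are doubling, have $C_1$-small boundary, still cover $\Omega_\lambda$ up to a negligible set, and retain bounded overlap, and one must check that the constants $\beta$ and $C_1$ can be chosen depending only on $n$ so that such cubes always exist — this is the content of the cited argument in \cite{MMT}. A secondary technical point is the precise choice of the auxiliary maximal operator $M_\mu$ (a non-homogeneous maximal function adapted to order-$m$ measures, bounded on $L^p(\mu)$ for $p>1$) and verifying the pointwise comparison for the far part; both are standard in the Tolsa framework. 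Since the problem explicitly permits assuming earlier results and points to \cite{MMT} for the small-boundary modifications, I would present the square-function good lambda inequality in detail and quote \cite{MMT} for the cube-selection lemma.
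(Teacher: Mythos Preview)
Your proposal is correct and follows exactly the approach the paper takes: the paper does not give a self-contained proof of Theorem~\ref{thm:goodlambda} but simply states that it is Tolsa's Theorem~2.22 in \cite{ToBook} adapted from Calder\'on--Zygmund operators to square functions, with the additional small-boundary cube-selection handled in \cite{MMT}. Your plan---good lambda inequality, Whitney decomposition of $\{V_\mu f>\lambda\}$, replacement by $(2,\beta)$-doubling cubes with $C_1$-small boundary via the pigeonhole argument from \cite{MMT}, near/far splitting with the far part controlled via the $x$-H\"older condition \eqref{eq:holx} and the near part via the weak $(1,1)$ bound on $G_Q$---is precisely the execution of that program, and in fact supplies considerably more detail than the paper itself.
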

\begin{rem}\label{rem:goodlambda}
In Theorem \ref{thm:goodlambda} the assumption $V\colon M(\R^n) \to L^{1,\infty}(\mu\rest G_Q)$ can be replaced by $V_{\mu}\colon L^2(\mu\rest G_Q) \to L^2(\mu\rest G_Q)$.
Indeed, the latter assumption implies the former one. The original reference is Nazarov--Treil--Volberg \cite{NTV:IMRN}, but
see also Theorem 2.16 in \cite{ToBook}.
\end{rem}

We are ready to prove our main theorem for SFs.
\begin{proof}[Proof of Theorem \ref{thm:main}]
Proposition \ref{prop:main} gives for every $(2,\beta)$-doubling cube $Q \subset \R^n$ with $C_1$-small boundary a subset
$G_Q \subset Q$ such that $\mu(G_Q) \gtrsim \mu(Q)$ and
\begin{displaymath}
\| 1_{G_Q} V_{\mu} f\|_{L^2(\mu)} \lesssim \|f\|_{L^2(\mu)}
\end{displaymath}
for every $f \in L^2(\mu)$ with spt$\,f \subset G_Q$.
Applying Theorem \ref{thm:goodlambda} and Remark \ref{rem:goodlambda} gives the result.
\end{proof}

\section{Comments on the Calder\'on--Zygmund case}\label{sec:CZO}
In this section we describe more carefully what kind of result can directly be obtained for Calder\'on--Zygmund operators (this means the same thing
as SIOs here) using the method of this paper. That is, we describe the analog of our main Proposition \ref{prop:main} for Calder\'on--Zygmund operators.
For clarity we formulate a slightly less technical statement involving testing functions rather than measures -- this is the one invoked in \cite{MMT}. First, let us recall some definitions.

We say that $K \colon \R^n \times \R^n \setminus \{(x,y): x = y\} \to \C$ is an $m$-dimensional Calder\'on--Zygmund kernel if
for some $C < \infty$ and $\alpha \in (0,1]$ we have that
\begin{displaymath}
|K(x,y)| \le \frac{C}{|x-y|^m}, \qquad x \ne y,
\end{displaymath}
\begin{displaymath}
|K(x,y) - K(x',y)| \le C\frac{|x-x'|^{\alpha}}{|x-y|^{m+\alpha}}, \qquad |x-y| \ge 2|x-x'|,
\end{displaymath}
and
\begin{displaymath}
|K(x,y) - K(x,y')| \le C\frac{|y-y'|^{\alpha}}{|x-y|^{m+\alpha}}, \qquad |x-y| \ge 2|y-y'|.
\end{displaymath}
We consider the following $\epsilon$-truncated singular integral operators $T_{\epsilon}$, $\epsilon > 0$:
\begin{displaymath}
T_{\epsilon}\nu(x) = \int_{|x-y| > \epsilon} K(x,y)\,d\nu(y), \qquad x \in \R^n.
\end{displaymath}
The integral on the right hand side is absolutely convergent if, say, $|\nu|(\R^n) < \infty$.

For a positive Radon measure $\mu$ in $\R^n$ and $f \in L^1_{\textup{loc}}(\mu)$ we define
\begin{displaymath}
T_{\mu, \epsilon}f(x)  = T_{\epsilon}(f\mu)(x), \qquad x \in \R^n.
\end{displaymath}
The integral defining $T_{\mu, \epsilon}f(x)$ is absolutely convergent if for example $f \in L^p(\mu)$ for some $1 \le p < \infty$ and $\mu$ is of order $m$.
We say that $T_{\mu}$ is bounded in $L^p(\mu)$ if the operators $T_{\mu, \epsilon}$ are bounded in $L^p(\mu)$ uniformly in $\epsilon > 0$.

We define the maximal operator $T_*$ by
\begin{displaymath}
T_{*}\nu(x) = \sup_{\epsilon > 0} |T_{\epsilon}\nu(x)|, \qquad \nu \in M(\R^n), \, x \in \R^n,
\end{displaymath}
Like above, we also set $T_{\mu, *}f(x) = T_*(f\mu)$.

For $p \in [1, \infty)$ we say that a function $b_Q$ is an $L^p(\mu)$-admissible test function on a cube $Q \subset \R^n$ (with constant $B_1$), if
\begin{enumerate}
\item spt$\,b_Q \subset Q$,
\item $\mu(Q) = \int_Q b_Q\,d\mu$ and
\item $\big( \frac{1}{\mu(Q)} \int_Q |b_Q|^p\,d\mu \big)^{1/p} \le B_1$.
\end{enumerate}
Below we will need $p > 1$ (see the comments after Theorem \ref{thm:main}).

It is easier to prove local $Tb$ theorems assuming conditions for maximal truncations $T_{\mu, *}b_Q$ rather than uniform conditions on $T_{\mu, \epsilon}b_Q$.
Of course, this distinction does not manifest itself in the square function setting.
There is a tradeoff here and the theorems are not strictly comparable. This is because
one needs much weaker conditions on $T_{\mu, *}b_Q$ compared to $T_{\mu, \epsilon}b_Q$, but of course $T_{\mu, *}b_Q$ is a larger
object to begin with. A theorem involving $T_{\mu, *}b_Q$ with very weak testing assumptions is very important also because
the most efficient strategies for proving local $Tb$ theorems involving $T_{\mu, \epsilon}b_Q$ are based on these. See Hyt\"onen--Nazarov \cite{HN}
for some related results in the Lebesgue situation. 

We now state the analog of Proposition \ref{prop:main}  -- the result concerning Hofmann's problem \cite{MMT} is reduced to this statement
using a fancy version of Cotlar's inequality (the main technical tool of \cite{MMT}). Even the flexibility with the exceptional set $U_Q$ is needed there.

\begin{prop}\label{prop:TstarmainProp}
Let $\mu$ be a measure of degree $m$ on $\R^n$ and $K$ be an $m$-dimensional kernel satisfying $K(x,y) = -K(y,x)$.
Let $Q \subset \R^n$ be a fixed cube, $q \in (1,\infty)$ and $b_Q$ be an $L^q(\mu)$-admissible test function in $Q$ with constant $B_1$.
Then there exists a small constant $c_1 = c_1(q, B_1) > 0$ with the following property. If there exist $s > 0$ and an exceptional set $U_Q \subset \R^n$
so that $\int_{U_Q} |b_Q|\,d\mu \le c_1\int_Q |b_Q|\,d\mu$ and
\begin{equation*}
\sup_{\lambda > 0} \lambda^s \mu(\{x \in Q \setminus U_Q\colon\, T_{\mu,*}b_Q(x) > \lambda\}) \le B_2\mu(Q) \textup{ for some } B_2 < \infty,
\end{equation*}
then there exists $G_Q \subset Q \setminus E_Q$ so that $\mu(G_Q) \gtrsim \mu(Q)$ and
$T_{\mu\rest G_Q}\colon L^2(\mu\rest G_Q) \to L^2(\mu\rest G_Q)$ with a norm depending on the constants in the assumptions.
\end{prop}
The proof is essentially the same than that of Proposition \ref{prop:main}. However, this time we of course need to use the Calder\'on--Zygmund version of the big piece $Tb$ theorem due to Nazarov--Treil--Volberg \cite{NTV:Vit}. This is also proved in detail in \cite{Vo}. See also
\cite{ToBook}, Theorem 5.1, for an exposition in the case of the Cauchy operator.
Notice that the big piece $Tb$ involves maximal truncations as well, which is an explanation why they appear here also.

The related local $Tb$ corollary follows using the good lambda method. Again, we could even use testing measures, exceptional sets and so forth,
but we prefer to state the slightly less technical statement here.
\begin{cor}\label{cor:locTbTstar}
Let $\mu$ be a measure of degree $m$ on $\R^n$ and $K$ be an $m$-dimensional kernel satisfying $K(x,y) = -K(y,x)$.
Suppose $q \in (1,\infty)$, and let $b$ and $t$ be large enough constants (depending only on $n$).
We assume that to every $(5,b)$-doubling cube $Q \subset \R^n$ with $t$-small boundary there is associated
an $L^q(\mu)$-admissible test function $b_Q$ in $Q$ with constant $B_1$ such that
\begin{displaymath}
\sup_{\lambda > 0} \lambda^s \mu(\{x \in Q\colon\, T_{\mu,*}b_Q(x) > \lambda\}) \le B_2\mu(Q) \textup{ for some } B_2 < \infty \textup{ and } s > 0.
\end{displaymath}
Then $T_{\mu}\colon L^2(\mu) \to L^2(\mu)$ with a bound depending on the above constants.
\end{cor}

\appendix
\section{Big pieces global $Tb$ for square functions}\label{s:global}
In this appendix we prove the big pieces global $Tb$ theorem for square functions which we needed above. For antisymmetric
Calder\'on--Zygmund operators with some assumptions about the maximal truncation $T_{*}b$ this is by Nazarov--Treil--Volberg \cite{NTV:Vit} (see
also \cite{ToBook} and \cite{Vo}). Our efficient proof in the square function setting is much more approachable than the proof in the Calder\'on--Zygmund case, which is why we
present it here.
\begin{thm}\label{thm:Tb}
Let $Q \subset \R^n$ be a cube.
Let $\sigma$ be a finite Borel measure in $\R^n$ so that spt$\,\sigma \subset Q$.
Suppose $b$ is a function satisfying that $\|b\|_{L^{\infty}(\sigma)} \le C_b$.
For every $w$ let $T_w$ be the union of the maximal
dyadic cubes $R \in \calD(w)$ for which
\begin{displaymath}
\Big| \int_R b \,d\sigma \Big| < c_{\textup{acc}}\sigma(R).
\end{displaymath}
We are also given a measurable set $H \subset \R^n$ satisfying the following properties.
\begin{itemize}
\item There is $\delta_0 < 1$ so that $\sigma(H \cup T_w) \le \delta_0\sigma(Q)$ for every $w$.
\item Every ball $B_r$ of radius $r$ satisfying $\sigma(B_r) > C_0r^m$ satisfies $B_r \subset H$.
\item We have for some $s > 0$ the estimate
\begin{displaymath}
\sup_{\lambda > 0} \lambda^s \sigma(\{x \in Q \setminus H\colon\, V_{\sigma, Q}b(x) > \lambda\}) \le C_1\sigma(Q).
\end{displaymath}
\end{itemize}
Then there is a measurable set $G_Q$ satisfying $G_Q \subset Q \setminus H$ and the following properties:
\begin{enumerate}[(a)]
\item $\sigma(G_Q) \gtrsim \sigma(Q)$.
\item $\| 1_{G_Q} V_{\sigma, Q}f \|_{L^2(\sigma)} \lesssim \|f\|_{L^2(\sigma)}$ for every $f \in L^2(\sigma)$.
\end{enumerate}
\end{thm}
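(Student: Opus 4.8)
The strategy is exactly the one announced in the introduction: I would run a single stopping-time (corona) decomposition adapted simultaneously to the accretivity of $b$ and to the upper density of $\sigma$, and then estimate the resulting corona by the good Whitney square function machinery, using the bounded accretive function $b$ as a test function. The weak $(1,s)$ testing hypothesis enters only through the top (paraproduct) term, after the big piece $G_Q$ has been carved out so that $V_{\sigma,Q}b$ is bounded on it. \emph{Reductions and parameters.} As in the proof of Proposition~\ref{prop:main} one may assume $\supp\,\sigma\subset Q$ and $\sigma(Q)>0$. Fix a realisation of the random grid $\calD(w)$ of Definition~\ref{defn:random} and let $Q_0\in\calD(w)$ be the minimal dyadic cube containing $Q$, so $\ell(Q_0)\sim\ell(Q)$ and $\supp\,\sigma\subset Q_0$. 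Since the third hypothesis gives $\sigma(\{x\in Q\setminus H\colon V_{\sigma,Q}b(x)>\lambda\})\le C_1\lambda^{-s}\sigma(Q)$, I fix $\lambda_0=\lambda_0(C_1,s,\delta_0)\ge 1$ so large that $E:=\{x\in Q\setminus H\colon V_{\sigma,Q}b(x)>\lambda_0\}$ satisfies $\sigma(E)\le\frac{1-\delta_0}{4}\sigma(Q)$. (The realisation $w$ is chosen at the end so that, in addition, the probabilistic ``good Whitney'' estimates used below are available.)

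\emph{The corona and the big piece.} Inside $Q_0$ let $\mathcal{S}$ be the family of maximal $R\in\calD(w)$ for which either (i) $|\int_R b\,d\sigma|<c_{\textup{acc}}\sigma(R)$, or (ii) $\sigma(R)>C_0'\ell(R)^m$, where $C_0'=C_0'(n,C_0)$ is chosen so that (ii) forces $R\subset H$ by the second hypothesis (a ball containing $R$ then has $\sigma$-mass exceeding $C_0$ times its radius to the power $m$). The type (i) cubes are precisely the maximal cubes whose union is $T_w$, so $\bigcup_{(i)}R\subset T_w$, while $\bigcup_{(ii)}R\subset H$. Set
\[ G_Q:=Q\setminus\Big(H\cup E\cup\bigcup_{R\in\mathcal{S}}R\Big). \]
First one checks that $Q_0$ itself is of neither type: otherwise $\sigma(Q)\le\sigma(Q_0)\le\sigma(H\cup T_w)\le\delta_0\sigma(Q)$, impossible; in particular $|\langle b\rangle_{Q_0}^\sigma|\ge c_{\textup{acc}}$. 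Since $\bigcup_{\mathcal{S}}R\subset H\cup T_w$, we get $\sigma(G_Q^c)\le\sigma(H\cup T_w)+\sigma(E)\le\big(\delta_0+\tfrac{1-\delta_0}{4}\big)\sigma(Q)$, hence $\sigma(G_Q)\ge\frac{3(1-\delta_0)}{4}\sigma(Q)\gtrsim\sigma(Q)$; this is (a). Let $\mathrm{Tree}$ denote the corona, i.e.\ the cubes $R\in\calD(w)$ with $R\subseteq Q_0$ that are not strictly contained in any member of $\mathcal{S}$. By dyadic nesting, every $R\in\calD(w)$ with $R\cap G_Q\ne\emptyset$ lies in $\mathrm{Tree}$, and every $R\in\mathrm{Tree}$ satisfies the density bound $\sigma(R)\le C_0'\ell(R)^m$ and the accretivity $|\langle b\rangle_R^\sigma|\ge c_{\textup{acc}}$ (no upper bound on $|\langle b\rangle_R^\sigma|$ is needed, since $\|b\|_{L^\infty(\sigma)}\le C_b$); finally $V_{\sigma,Q}b\le\lambda_0$ holds $\sigma$-a.e.\ on $G_Q$, because $G_Q\cap E=\emptyset$ and $G_Q\subset Q\setminus H$.

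\emph{The $L^2$ estimate.} For $f\in L^2(\sigma)$ expand, with respect to the corona $\mathrm{Tree}$ with top $Q_0$ and leaves $\mathcal{S}$,
\[ f = \langle f\rangle_{Q_0}^{\sigma,b}\,b\,1_{Q_0} + \sum_{R\in\mathrm{Tree}}\Delta_R^{\sigma,b}f, \qquad \langle f\rangle_{Q_0}^{\sigma,b} := (\langle b\rangle_{Q_0}^\sigma)^{-1}\langle f\rangle_{Q_0}^\sigma, \]
where the $b$-twisted martingale differences $\Delta_R^{\sigma,b}f$ are built from the dyadic children of $R$, with the convention that a child belonging to $\mathcal{S}$ carries the restriction $f1_{R''}$ itself; thus $\int\Delta_R^{\sigma,b}f\,d\sigma=0$ and the expansion reconstructs $f$ exactly. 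Then $1_{G_Q}V_{\sigma,Q}f$ is controlled by two terms. For the \emph{paraproduct term}, since $\supp\,\sigma\subset Q_0$ we have $V_{\sigma,Q}(b1_{Q_0})=V_{\sigma,Q}b$, whence
\[ \|1_{G_Q}V_{\sigma,Q}(\langle f\rangle_{Q_0}^{\sigma,b}b1_{Q_0})\|_{L^2(\sigma)}^2 = |\langle f\rangle_{Q_0}^{\sigma,b}|^2\!\int_{G_Q}(V_{\sigma,Q}b)^2\,d\sigma \le \lambda_0^2\,|\langle f\rangle_{Q_0}^{\sigma,b}|^2\sigma(Q_0) \le \lambda_0^2 c_{\textup{acc}}^{-2}\|f\|_{L^2(\sigma)}^2, \]
using $|\langle f\rangle_{Q_0}^{\sigma,b}|^2\sigma(Q_0)=|\int f\,d\sigma|^2/(|\langle b\rangle_{Q_0}^\sigma|^2\sigma(Q_0))\le c_{\textup{acc}}^{-2}\|f\|_{L^2(\sigma)}^2$; this is the only place where the testing hypothesis is used. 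For the \emph{martingale term} one drops $1_{G_Q}$ and the truncation and bounds $\int_0^\infty\big\|\sum_{R\in\mathrm{Tree}}\theta_t^\sigma\Delta_R^{\sigma,b}f\big\|_{L^2(\sigma)}^2\frac{dt}{t}$: by the cancellation $\int\Delta_R^{\sigma,b}f\,d\sigma=0$ together with \eqref{eq:size}, \eqref{eq:holy}, \eqref{eq:holx} and the density bound on $\mathrm{Tree}$-cubes, an almost-orthogonality estimate (Schur's test in the cube variable plus the $t$-integration) gives $\lesssim\sum_{R\in\mathrm{Tree}}\|\Delta_R^{\sigma,b}f\|_{L^2(\sigma)}^2$, and the quasi-orthogonality of the $b$-twisted differences gives $\lesssim\|f\|_{L^2(\sigma)}^2$. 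Adding the two terms proves (b); since $\sigma(Q)\le\sigma(Q_0)\lesssim\ell(Q)^m$, Lemma~\ref{lem:sup} upgrades this to the untruncated $V_\sigma$ on $G_Q$ if desired.

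\emph{Main obstacle.} The heart of the matter is the martingale term in the non-homogeneous setting: when $\sigma$ is only of order $m$ the naive bound for $\|\theta_t^\sigma\Delta_R^{\sigma,b}f\|$ is not summable over $R$ and $t$, and the estimate must be carried out through the authors' good Whitney technique --- writing the square function as an average over the random grids $\calD(w)$, discarding the small (absorbable) contribution of Whitney regions $W_R$ whose cube $R$ sits near the boundary of a much larger ancestor, and exploiting that a good Whitney region detects $\sigma$ only at its own scale --- which also bypasses the need for any small-boundary hypothesis on $\sigma$ itself. Closely tied to this is establishing the quasi-orthogonality $\sum_R\|\Delta_R^{\sigma,b}f\|_{L^2(\sigma)}^2\lesssim\|f\|_{L^2(\sigma)}^2$ for a bounded accretive $b$ over a non-doubling corona, which requires the Nazarov--Treil--Volberg type suppression/stopping bookkeeping. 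Once these are in place, the corona construction, the measure bound for $G_Q$, and the paraproduct term are all soft.
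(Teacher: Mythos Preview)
Your outline has the right general shape (corona, paraproduct, good Whitney), but there are two genuine gaps that make the argument fail as written.

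The main one is the ``martingale term''. The claim that cancellation $\int\Delta_P^{\sigma,b}f\,d\sigma=0$ together with a Schur-type almost-orthogonality bounds $\int_0^\infty\big\|\sum_P\theta_t^\sigma\Delta_P^{\sigma,b}f\big\|_{L^2(\sigma)}^2\,\frac{dt}{t}$ is not correct. When $(x,t)\in W_R$ with $R$ deep inside a much larger $P$ (the nested case $\ell(P)\gg\ell(R)$, $R\subset P$), the mean-zero property gives no usable decay: on the child $P'\ni R$ one has $\Delta_P^{\sigma,b}f=B_{P'}\,b$ for a constant $B_{P'}$, so the local contribution is $B_{P'}\,\theta_t^\sigma b(x)$, and summing over all nested ancestors telescopes to $\frac{\langle f\rangle_{P_{R,r}}}{\langle b\rangle_{P_{R,r}}}\theta_t^\sigma b(x)$ plus off-diagonal boundary terms. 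In other words, a paraproduct appears at \emph{every} scale, not only at the top $Q_0$; your sentence ``this is the only place where the testing hypothesis is used'' is exactly what goes wrong. Controlling these scale-by-scale paraproducts requires the Carleson estimate $\sum_{P\subset S}\iint_{W_P}|\theta_t^\sigma b|^2\,\tfrac{dt}{t}\,d\sigma\lesssim\sigma(S)$, which needs $V_{\sigma,Q}b\lesssim 1$ on \emph{all} of $S$, not merely on $G_Q$; since you have dropped $1_{G_Q}$ for this term, the bound is unavailable. The paper resolves this by \emph{suppressing} the kernel, setting $\tilde s_t(x,y)=s_t(x,y)1_{\{V_{\sigma,Q}b\le\lambda_0\}}(x)$, so that $\widetilde V_{\sigma,Q}b\le\lambda_0$ holds everywhere and the Carleson bound goes through; one then passes from $\theta_t^\sigma$ to $\tilde\theta_t^\sigma$ using that they agree off $S_0$.

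The second gap is the interplay between $G_Q$ and the randomisation. Your set $G_Q$ depends on the fixed grid $\calD(w)$ through the stopping family $\mathcal{S}$ (the type (i) cubes are exactly those forming $T_w$), yet the good-Whitney argument you invoke must \emph{average} over $w$: the bad part is only small in expectation, and no single $w$ chosen ``at the end'' makes $\sum_{R\ \calD(w)\text{-bad}}(\cdots)\le\frac{\tau}{2}\sum_R(\cdots)$ hold uniformly in $f$. The paper's fix is to make $G_Q$ $w$-independent by setting $p_0(x)=\mathbb{P}(x\notin H\cup T_w\cup S_0)$ and $G_Q=\{p_0>\tau\}$, and then using the pointwise inequality $\int_{G_Q}h\,d\sigma\le\tau^{-1}E_w\int_{G_Q\setminus[H\cup T_w\cup S_0]}h\,d\sigma$ to legitimately bring the $w$-average inside before splitting into $\calD(w)$-good and $\calD(w)$-bad Whitney cubes $R\in\calD_0$. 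Only after this does one expand $f$ in $b$-adapted martingales on the transit cubes of $\calD(w)$ and run the four-case analysis, with the nested case producing the Carleson-controlled paraproduct described above.
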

\begin{rem}
Only the good lambda method (Theorem \ref{thm:goodlambda}) and hence the main theorem (Theorem \ref{thm:main}) require the x-continuity of
$s_t$ i.e. \eqref{eq:holx}. Proposition \ref{prop:main} and the above Theorem \ref{thm:Tb} do not require it.
\end{rem}

\begin{proof}[Proof of Theorem \ref{thm:Tb}]
We begin by suppressing our operator appropriately.
Set
\begin{displaymath}
S_0 = \{x \in Q\colon\, V_{\sigma, Q}b(x) > \lambda_0\},
\end{displaymath}
where $0 < \lambda_0 \lesssim 1$ is large enough.
Now simply define
\begin{displaymath}
\tilde s_t(x,y) = s_t(x,y)1_{\R^n \setminus S_0}(x).
\end{displaymath}
Notice that $(\tilde s_t)_{t > 0}$ is a measurable family of kernels satisfying \eqref{eq:size} and \eqref{eq:holy}, which is all we shall need in what follows.
Now, $\widetilde V_{\sigma, Q}$ (and similar objects) are defined in the natural way using the kernels $\tilde s_t$. Then for any $f$ we have
\begin{equation}\label{formula for suppressed}
\widetilde V_{\sigma, Q}f(x) = V_{\sigma, Q}f(x) 1_{\R^n \setminus S_0}(x)=V_{\sigma, Q}f(x) 1_{\R^n \setminus(Q \cap \{V_{\sigma, Q}b > \lambda_0\})}(x),
\end{equation}
and from here we can easily read two key things about these suppressed operators. The first is that for any $f$ we have
\begin{equation}\label{eq:tildeVEqV}
\widetilde V_{\sigma, Q}f(x) =  V_{\sigma, Q}f(x) \textup{ for } x \in \R^n \setminus S_0,
\end{equation}
and the second is that
\begin{equation}\label{eq:tildeVb}
\widetilde V_{\sigma, Q}b(x) \le \lambda_0 \textup{ for every } x \in Q.
\end{equation}

Finally, with a large enough choice of $\lambda_0$ we have (for every $w$) that $\sigma(H \cup T_w \cup S_0) \le \delta_1\sigma(Q)$
for some $\delta_1 < 1$. Indeed, 
\begin{displaymath}
\sigma(S_0 \setminus H) \le \sigma(\{x \in Q \setminus H\colon\, V_{\sigma, Q}b(x) > \lambda_0\}) \le \frac{C_1}{\lambda_0^s} \sigma(Q).
\end{displaymath}
At this point $\lambda_0 \lesssim 1$ can be fixed by demanding that it satisfies
\begin{displaymath}
\lambda_0^s > \frac{2C_1}{1-\delta_0},
\end{displaymath}
whence we conclude that
\begin{equation}\label{eq:expmes}
\sigma(H \cup T_w \cup S_0) \le \sigma(H \cup T_w) + \sigma(S_0 \setminus H) \le \frac{1+\delta_0}{2}\sigma(Q) =: \delta_1\sigma(Q), \, \delta_1 < 1.
\end{equation}
We are now done with suppressing the operator.

We will next define the set $G_Q$. This is done by setting
\begin{displaymath}
p_0(x) = \mathbb{P}(\{w \in \Omega\colon\, x \in Q \setminus [H \cup T_w \cup S_0]\}),
\end{displaymath}
and then defining
\begin{displaymath}
G_Q = \Big\{x \in Q\colon\, p_0(x) > \frac{1-\delta_1}{2} =: \tau\Big\} \subset Q \setminus H.
\end{displaymath}
An argument by Nazarov--Treil--Volberg (see \cite{NTV:Vit}) shows that $\sigma(G_Q) \gtrsim \sigma(Q)$. Indeed, the argument goes as follows.
Notice first that by \eqref{eq:expmes} we have that
\begin{displaymath}
\int_Q p_0(x)\,d\sigma(x) = \int_{\Omega} \sigma(Q \setminus [H \cup T_w \cup S_0])\,d\mathbb{P}(w) \ge (1-\delta_1)\sigma(Q).
\end{displaymath}
Since $1-p_0 \ge 0$ everywhere, and $1-p_0 \ge 1-\tau = (1+\delta_1)/2$ on $Q \setminus G_Q$, we have
\begin{displaymath}
\int_Q (1-p_0(x))\,d\sigma(x) \ge \int_{Q \setminus G_Q} (1-p_0(x))\,d\sigma(x) \ge \frac{1+\delta_1}{2}\sigma(Q \setminus G_Q).
\end{displaymath}
We conclude that
\begin{displaymath}
\sigma(Q \setminus G_Q) \le \frac{2}{1+\delta_1}\Big( \sigma(Q) - \int_Q p_0(x)\,d\sigma(x) \Big) \le \frac{2\delta_1}{1+\delta_1}\sigma(Q),
\end{displaymath}
and so
\begin{displaymath}
\sigma(G_Q) \ge \Big(1-\frac{2\delta_1}{1+\delta_1}\Big)\sigma(Q) = \frac{1-\delta_1}{1+\delta_1}\sigma(Q).
\end{displaymath}

It remains to prove that $\| 1_{G_Q} V_{\sigma, Q}f \|_{L^2(\sigma)} \lesssim \|f\|_{L^2(\sigma)}$ for every $f \in L^2(\sigma)$.
The key property of $G_Q$ is as follows. Suppose $h \ge 0$ is any positive function. Then we have that
\begin{displaymath}
\int_{G_Q} h(x) \,d\sigma(x) \le \tau^{-1} \int_{G_Q} p_0(x)h(x)\,d\sigma(x) = \tau^{-1} E_w \int_{G_Q \setminus [H \cup T_w \cup S]} h(x)\,d\sigma(x).
\end{displaymath}
We apply this as follows:
\begin{align*}
\| 1_{G_Q} V_{\sigma, Q}f \|_{L^2(\sigma)}^2 &= \int_{G_Q} \int_0^{\ell(Q)} |\theta_t^{\sigma} f(x)|^2\,\frac{dt}{t}\,d\sigma(x)  \\
&\le \tau^{-1} E_w \int_{G_Q \setminus [H \cup T_w \cup S_0]} \int_0^{\ell(Q)} |\theta_t^{\sigma} f(x)|^2\,\frac{dt}{t}\,d\sigma(x) \\
&= \tau^{-1} E_w \sum_{R \in \calD_0}  \int_{[R \cap G_Q] \setminus [H \cup T_w \cup S_0]} \int_{\ell(R)/2}^{\min(\ell(R), \ell(Q))} |\theta_t^{\sigma} f(x)|^2\,\frac{dt}{t}\,d\sigma(x),
\end{align*}
where again $\calD_0 = \calD(0)$. Given $w$ we then write
\begin{displaymath}
\sum_{R \in \calD_0} = \mathop{\sum_{R \in \calD_0}}_{R \textup{ is } \calD(w)\textup{-good}} + \mathop{\sum_{R \in \calD_0}}_{R \textup{ is } \calD(w)\textup{-bad}},
\end{displaymath}
where $R \in \calD_0$ is said to be $\calD(w)$-good if $d(R, \partial P) > \ell(R)^{\gamma}\ell(P)^{1-\gamma}$ for every $P \in \calD(w)$ satisfying $\ell(P) \ge 2^r\ell(R)$.
Here $r \lesssim 1$ is a fixed large enough parameter, and $\gamma := \alpha/(2m+2\alpha)$. It is a standard fact by Nazarov--Treil--Volberg (see \cite{NTV})
that given $R \in \calD_0$ we have that
\begin{equation}\label{eq:bad}
\mathbb{P}(\{w \in \Omega\colon\, R\, \textup{ is } \calD(w)\textup{-bad}\}) \le \tau/2
\end{equation}
for a large enough fixed $r$.

Using \eqref{eq:bad} we estimate
\begin{align*}
E_w &\mathop{\sum_{R \in \calD_0}}_{R \textup{ is } \calD(w)\textup{-bad}} \int_{[R \cap G_Q] \setminus [H \cup T_w \cup S_0]} \int_{\ell(R)/2}^{\min(\ell(R), \ell(Q))} |\theta_t^{\sigma} f(x)|^2\,\frac{dt}{t}\,d\sigma(x) \\
&\le \mathop{\sum_{R \in \calD_0}} \mathbb{P}(\{w \in \Omega\colon\, R\, \textup{ is } \calD(w)\textup{-bad}\})  \int_{R \cap G_Q} \int_{\ell(R)/2}^{\min(\ell(R), \ell(Q))} |\theta_t^{\sigma} f(x)|^2\,\frac{dt}{t}\,d\sigma(x) \\
&\le \frac{\tau}{2} \int_{G_Q} \int_0^{\ell(Q)} |\theta_t^{\sigma} f(x)|^2\,\frac{dt}{t}\,d\sigma(x).
\end{align*}
To be precise, for the following we would need the a priori finiteness of this term. However, this is easy to arrange in a multiple of ways, so we skip this technicality.
We may now conclude (using also that $\theta^{\sigma}_t f(x) = \widetilde \theta^{\sigma}_t f(x)$ for every $x \in Q \setminus S_0$ by \eqref{eq:tildeVEqV}) that
\begin{align*}
\| 1_{G_Q}& V_{\sigma, Q}f \|_{L^2(\sigma)}^2 \\
&\le 2\tau^{-1} E_w  \mathop{\sum_{R \in \calD_0}}_{R \textup{ is } \calD(w)\textup{-good}}
\int_{[R \cap G_Q] \setminus [H \cup T_w \cup S_0]} \int_{\ell(R)/2}^{\min(\ell(R), \ell(Q))} |\widetilde \theta_t^{\sigma} f(x)|^2\,\frac{dt}{t}\,d\sigma(x) \\
&\lesssim  E_w \mathop{\mathop{\sum_{R \in \calD_0}}_{R \textup{ is } \calD(w)\textup{-good}}}_{R \not \subset H \cup T_w} \int_R \int_{\ell(R)/2}^{\min(\ell(R), \ell(Q))}
|\widetilde \theta_t^{\sigma} f(x)|^2\,\frac{dt}{t}\,d\sigma(x).
\end{align*}
We will now fix $w$, write $\calD = \calD(w)$ and $T = T_w$, and prove that
\begin{equation}\label{eq:mainthing}
\mathop{\mathop{\sum_{R \in \calD_0}}_{R \textup{ is } \calD\textup{-good}}}_{R \not \subset H \cup T} \int_R \int_{\ell(R)/2}^{\min(\ell(R), \ell(Q))}
|\widetilde \theta_t^{\sigma} f(x)|^2\,\frac{dt}{t}\,d\sigma(x) \lesssim \|f\|_{L^2(\sigma)}^2.
\end{equation}
This will then end the proof.

The important property of the set $T$ is that if $R \in \calD$ and
$R \not \subset T$ then
\begin{displaymath}
\Big| \int_R b\,d\sigma \Big| \gtrsim \sigma(R),
\end{displaymath}
while the important property of the set $H$ is that if $L \subset \R^n$ is an arbitrary cube satisfying $L \not \subset H$ then $\sigma(\lambda L) \lesssim \lambda^m \ell(L)^m$
for all $\lambda \ge 1$. It is useful to say that
$R \in \dtr_0$ (tr stands for \emph{transit}) if $R \in \calD_0$, $\sigma(R) \ne 0$ and $R \not \subset H \cup T$, and $P \in \dtr$ if $P \in \calD$, $\sigma(P) \ne 0$ and $P \not \subset H \cup T$.
Note that $\dtr_0$ really means $w$-transit cubes from $\calD_0$ (and one should really write $\dtr_0(w)$), but $w$ is fixed and so $T$ is fixed and we do not need to insist on this.

It is time to expand the function $f$ in the grid $\calD$ using $b$-adapted martingales only in the transit cubes $P \in \dtr$.
Denote $\langle f \rangle_A = \langle f \rangle_A^{\sigma} =  \sigma(A)^{-1} \int_A f\,d\sigma$, if $\sigma(A) \ne 0$. Let $P_0 = Q^*(w)$ (see the Definition \ref{defn:random})
so that all $P \in \calD$ satisfy $P \subset P_0$.
Without loss of generality we can assume that spt$\,b \subset Q$ and spt$\,f \subset Q$.
Define
\begin{displaymath}
E_{P_0}f = \frac{\langle f \rangle_{P_0}}{\langle b \rangle_{P_0}} b.
\end{displaymath}
(This is actually independent of $w$ since it just equals $E_Qf$, because spt$\,\sigma \subset Q \subset P_0$).
For any cube $P  \in \dtr$ define the function $\Delta_P f$ as follows:
\begin{displaymath}
\Delta_P f = \mathop{\sum_{P' \in \textup{ch}(P)}}_{\sigma(P') \ne 0} A_{P'}(f) 1_{P'},
\end{displaymath}
where
\begin{displaymath}
A_{P'}(f) = \left\{ \begin{array}{ll}
\Big( \frac{\langle f \rangle_{P'}}{\langle b \rangle_{P'}}-\frac{\langle f \rangle_P}{\langle b \rangle_P} \Big)b & \textrm{if } P' \in \dtr,\\
f-\frac{\langle f \rangle_P}{\langle b \rangle_P}b & \textrm{if } P' \not \in \dtr.\\
\end{array} \right.
\end{displaymath}
Notice that $P_0 \in \dtr$, since $\sigma(P_0) = \sigma(Q)$ and every non-transit cube $P$ has to satisfy $\sigma(P) \le \sigma(H \cup T) \le \delta_0\sigma(Q)$.
It is easy to see that
\begin{displaymath}
f = \sum_{P \in \dtr} \Delta_P f + E_{P_0}f
\end{displaymath}
$\sigma$-a.e. and in $L^2(\sigma)$, and that
\begin{displaymath}
\sum_{P \in \dtr} \|\Delta_P f\|_{L^2(\sigma)}^2 + \|E_{P_0}f\|_{L^2(\sigma)} \lesssim \|f\|_{L^2(\sigma)}^2.
\end{displaymath}
See e.g. Section 5.4.4 of \cite{ToBook}. It will be convenient to exploit notation by redefining on the largest level $P_0$ the operator $\Delta_{P_0}f$ to be $\Delta_{P_0}f + E_{P_0}f$.

Going back to \eqref{eq:mainthing} we see that we need to control
\begin{displaymath}
\mathop{\sum_{R \in \dtr_0}}_{R \textup{ is } \calD\textup{-good}}\int_R \int_{\ell(R)/2}^{\min(\ell(R), \ell(Q))}
\Big|\sum_{P \in \dtr} \widetilde \theta_t^{\sigma} \Delta_Pf(x)\Big|^2\,\frac{dt}{t}\,d\sigma(x).
\end{displaymath}
Given $R \in \dtr_0$, $R$ is $\calD(w)$-good, the $P \in \dtr$ summation is split in to the following four pieces:
\begin{enumerate}
\item $P$: $\ell(P)< \ell(R)$;
\item $P$: $\ell(P)\geq \ell(R)$ and $d(P,R)> \ell(R)^\gamma \ell(P)^{1-\gamma}$;
\item $P$: $\ell(R) \leq \ell(P) \leq 2^r \ell(R)$ and $d(P,R) \leq \ell(R)^\gamma \ell(P)^{1-\gamma}$;
\item $P$: $\ell(P)>2^r\ell(R)$ and $d(P,R) \leq \ell(R)^\gamma \ell(P)^{1-\gamma}$.
\end{enumerate}
For future need we set
\begin{align*}
A_{PR} &:= \frac{\ell(P)^{\alpha/2}\ell(R)^{\alpha/2}}{D(P,R)^{m+\alpha}} \sigma(P)^{1/2}\sigma(R)^{1/2}; \\
D(P,R) &:= \ell(P) + \ell(R) + d(P,R).
\end{align*}
The following estimate by Nazarov--Treil--Volberg (see e.g. \cite{NTV}) is extremely useful
\begin{displaymath}
\mathop{\sum_{P \in \dtr}}_{R \in \dtr_0} A_{PR} x_P y_R \lesssim \Big( \sum_P x_P^2 \Big)^{1/2} \Big( \sum_R y_R^2 \Big)^{1/2} 
\end{displaymath}
for every $x_P, y_R \ge 0$. For an easy reference, see pp. 159--160 in \cite{ToBook}. In particular, we have that
\begin{displaymath}
\Big(\sum_{R \in \dtr_0} \Big[ \sum_{P \in \dtr} A_{PR} x_P \Big]^2\Big)^{1/2} \lesssim \Big( \sum_P x_P^2 \Big)^{1/2}.
\end{displaymath}

The sums (1) and (2) are handled as follows. Notice that in (1) we have $\ell(P) < \ell(R) \le \ell(P_0)$ so that $\int \Delta_P f\,d\sigma = 0$.
Therefore, using the $y$-H\"older for $\tilde s_t$ we get
\begin{equation}\label{eq:est1}
|\widetilde \theta^{\sigma}_t \Delta_P f(x)| \lesssim A_{PR}\sigma(R)^{-1/2}\|\Delta_Pf\|_{L^2(\sigma)}, \qquad (x,t) \in W_R,
\end{equation}
where $W_R := R \times [\ell(R)/2, \ell(R))$.
In the case (2), the size estimate for $\tilde s_t$ yields
\begin{displaymath}
|\widetilde \theta^{\sigma}_t \Delta_P f(x)| \lesssim  \frac{\ell(R)^{\alpha}}{d(P,R)^{m+\alpha}}\sigma(P)^{1/2}\|\Delta_Pf\|_{L^2(\sigma)}, \qquad (x,t) \in W_R.
\end{displaymath}
But this yields the same bound as in \eqref{eq:est1}, since here
\begin{displaymath}
 \frac{\ell(R)^{\alpha}}{d(P,R)^{m+\alpha}}\sigma(P)^{1/2} \lesssim A_{PR}\sigma(R)^{-1/2}.
\end{displaymath}
To see this, notice that it is obvious if $d(P,R) \ge \ell(P)$. In the opposite case note that $d(P,R)^{m+\alpha} \gtrsim D(P,R)^{m+\alpha}\ell(P)^{-\alpha/2}\ell(R)^{\alpha/2}$.
This is seen by combining the facts that $d(P,R) > \ell(R)^{\gamma}\ell(P)^{1-\gamma}$, $\gamma m + \gamma \alpha = \alpha/2$ and $D(P,R) \lesssim \ell(P)$.
Thus, also in the case (2) the estimate \eqref{eq:est1} holds.
The cases (1) and (2) are therefore under control via the estimate
\begin{displaymath}
 \sum_{R \in \dtr_0} \Big[  \sum_{P \in \dtr} A_{PR} \|\Delta_Pf\|_{L^2(\sigma)} \Big]^2 
\lesssim \sum_{P \in \dtr} \|\Delta_P f\|_{L^2(\sigma)}^2 \lesssim \|f\|_{L^2(\sigma)}^2.
\end{displaymath}

The summation (3) is even easier.
Using that $P$ and $R$ are both
transit, $t \sim \ell(R) \sim \ell(P)$ and the size estimate for $\tilde s_t$ we see that
\begin{displaymath}
|\widetilde \theta^{\sigma}_t \Delta_P f(x)| \lesssim t^{-m} \sigma(P)^{1/2} \|\Delta_Pf\|_{L^2(\sigma)} \lesssim \sigma(R)^{-1/2} \|\Delta_Pf\|_{L^2(\sigma)}, \qquad (x,t) \in W_R.
\end{displaymath}
This can then easily be summed, since given $R$ there are only finitely many $P$ such that $\ell(P) \sim \ell(R)$ and $d(P,R) \lesssim \min(\ell(P), \ell(R))$.

We move on to the main term (4). For each $R \in \dtr_0$ satisfying that $R$ is $\calD$-good, $R \subset P_0$ and $\ell(R) < 2^{-r}\ell(P_0)$ we let
$P_{R,k} \in \calD$, $k \in \{r, r+1, \ldots, \log_2 [\ell(P_0)/\ell(R)]\}$, be the unique $\calD$-cube satisfying that $\ell(P_{R,k}) = 2^{k}\ell(R)$ and
$R \subset P_{R,k}$. Such a cube exists since $R$ is $\calD$-good. Moreover, since $R \not \subset H \cup T$ then also $P_{R,k} \not \subset H \cup T$ i.e. $P_{R,k} \in \dtr$.
We see that we only need to prove that
\begin{equation*}
\mathop{\mathop{\sum_{R \in \dtr_0:\, R \subset P_0}}_{R \textup{ is } \calD\textup{-good}}}_{\ell(R) < 2^{-r}\ell(P_0)} \int_R \int_{\ell(R)/2}^{\min(\ell(R), \ell(Q))}
\Big| \sum_{k=r+1}^{\log_2 [\ell(P_0)/\ell(R)]} \widetilde \theta^{\sigma}_t \Delta_{P_{R,k}} f(x) \Big|^2\, \frac{dt}{t} \,d\sigma(x) \lesssim \|f\|_{L^2(\sigma)}^2.
\end{equation*}
Recalling that all $P_{R,k}$, $r \le k \le \log_2 [\ell(P_0)/\ell(R)]$ are transit, we see using a standard calculation that
$ \sum_{k=r+1}^{\log_2 [\ell(P_0)/\ell(R)]} \widetilde \theta^{\sigma}_t \Delta_{P_{R,k}} f$ equals
\begin{align*}
 - \sum_{k=r+1}^{\log_2 [\ell(P_0)/\ell(R)]} &B_{P_{R,k-1}} \widetilde\theta^{\sigma}_t(1_{\R^n \setminus P_{R,k-1}}b) \\
&+ \sum_{k=r+1}^{\log_2 [\ell(P_0)/\ell(R)]}\widetilde \theta^{\sigma}_t (1_{P_{R,k} \setminus P_{R,k-1}}\Delta_{P_{R,k}} f) 
+ \frac{\langle f \rangle_{P_{R,r}}}{\langle b \rangle_{P_{R,r}}}\widetilde\theta^{\sigma}_t  b,
\end{align*}
where
\begin{displaymath}
B_{P_{R,k-1}} = \langle \Delta_{P_{R,k}} f / b \rangle_{P_{R,k-1}} = \left\{ \begin{array}{ll}
\frac{\langle f \rangle_{P_{R,k-1}}}{\langle b \rangle_{P_{R,k-1}}} -  \frac{\langle f \rangle_{P_{R,k}}}{\langle b \rangle_{P_{R,k}}}, & \textup{if } r+1 \le k < \log_2\frac{\ell(P_0)}{\ell(R)}, \\
\frac{\langle f \rangle_{P_{R,k-1}}}{\langle b \rangle_{P_{R,k-1}}}, & k = \log_2\frac{\ell(P_0)}{\ell(R)}. \end{array} \right.
\end{displaymath}

Let us start deciphering this by proving that the term
\begin{displaymath}
\Pi := \mathop{\mathop{\sum_{R \in \dtr_0:\, R \subset P_0}}_{R \textup{ is } \calD\textup{-good}}}_{\ell(R) < 2^{-r}\ell(P_0)} \Big|\frac{\langle f \rangle_{P_{R,r}}}{\langle b \rangle_{P_{R,r}}} \Big|^2
\int_R \int_{\ell(R)/2}^{\min(\ell(R), \ell(Q))}|\widetilde \theta^{\sigma}_tb(x)  |^2\, \frac{dt}{t} \,d\sigma(x) 
\end{displaymath}
is under control. We simply estimate
\begin{displaymath}
\Pi \lesssim \sum_{P \in \dtr} |\langle f \rangle_P|^2 a_P, \qquad a_P :=  \mathop{\mathop{\mathop{\sum_{R \in \dtr_0:\, R \subset P_0}}_{R \textup{ is } \calD\textup{-good}}}_{\ell(R) < 2^{-r}\ell(P_0)}}_{P_{R,r} = P}
\int_R \int_{\ell(R)/2}^{\min(\ell(R), \ell(Q))}|\widetilde \theta^{\sigma}_tb(x)  |^2\, \frac{dt}{t} \,d\sigma(x).
\end{displaymath}
To have $\Pi \lesssim \|f\|_{L^2(\sigma)}^2$ it is enough to verify the Carleson property of $(a_P)_{P \in \calD}$. To this end, let $S \in \calD$ be arbitrary.
We have that
\begin{align*}
\mathop{\sum_{P \in \calD}}_{P \subset S} a_P &\le \mathop{\sum_{R \in \dtr_0}}_{R \subset S} \iint_{[S \times (0, \ell(Q))] \cap W_R} |\widetilde \theta^{\sigma}_tb(x)  |^2\, \frac{dt}{t} \,d\sigma(x) \\
&\le \iint_{S \times (0, \ell(Q))} |\widetilde \theta^{\sigma}_tb(x)  |^2\, \frac{dt}{t} \,d\sigma(x) = \int_S [\widetilde V_{\sigma, Q}b(x)]^2 \,d\sigma(x) \lesssim \sigma(S),
\end{align*}
since $\widetilde V_{\sigma, Q}b(x) \lesssim 1$ for every $x \in \textup{spt}\,\sigma$ by \eqref{eq:tildeVb}.

We are only left with some completely standard calculations (but we need to be slightly careful to use transitivity).
So let us first control $|B_{P_{R,k-1}} \widetilde\theta^{\sigma}_t(1_{\R^n \setminus P_{R,k-1}}b)(x)|$ for $(x,t) \in W_R$. 
Notice that $R \subset B(x, d(R, \partial P_{R, k-1})/2)$, since
$d(R, \partial P_{R, k-1}) \ge 2^{r(1-\gamma)}\ell(R) \ge C_d\ell(R)$ by having $r$ large enough to begin with.
The point is that $B(x, d(R, \partial P_{R, k-1})/2) \not \subset H$. Moreover, we clearly have that
$B(x, d(R, \partial P_{R, k-1})/2) \subset P_{R, k-1}$. Using these facts we get
\begin{align*}
|\widetilde\theta^{\sigma}_t(1_{\R^n \setminus P_{R,k-1}}b)(x)| &\lesssim \int_{\R^n \setminus B(x, d(R, \partial P_{R, k-1})/2)} \frac{\ell(R)^{\alpha}}{|x-y|^{m+\alpha}}\,d\sigma(y) \\
&\lesssim \ell(R)^{\alpha}d(R, \partial P_{R, k-1})^{-\alpha} \lesssim \Big(\frac{\ell(R)}{\ell(P_{R, k-1})}\Big)^{\alpha/2} \sim 2^{-\alpha k/2},
\end{align*}
where we also used that $d(R, \partial P_{R, k-1}) \ge \ell(R)^{1/2}\ell(P_{R, k-1})^{1/2}$ (which follows since $R$ is $\calD$-good). Since $P_{R,k-1} \not \subset T$ we have
\begin{align*}
|B_{P_{R,k-1}}| \sigma(P_{R,k-1}) &\lesssim \Big| \int_{P_{R,k-1}} B_{P_{R,k-1}} b\,d\sigma\Big|  \\
&= \Big|\int_{P_{R,k-1}} \Delta_{P_{R,k}} f\,d\sigma\Big| \le \sigma(P_{R,k-1})^{1/2} \| \Delta_{P_{R,k}} f\|_{L^2(\sigma)}.
\end{align*}
Combining these estimates we get for $(x,t) \in W_R$ that
\begin{equation}\label{eq:same1}
|B_{P_{R,k-1}} \widetilde\theta^{\sigma}_t(1_{\R^n \setminus P_{R,k-1}}b)(x)| \lesssim 2^{-\alpha k/2} \sigma(P_{R,k-1})^{-1/2} \| \Delta_{P_{R,k}} f\|_{L^2(\sigma)}.
\end{equation}

Let us still estimate $|\widetilde \theta^{\sigma}_t (1_{P_{R,k} \setminus P_{R,k-1}}\Delta_{P_{R,k}} f)(x)|$ for $(x,t) \in W_R$. Let $S \in \textup{ch}(P_{R,k})$, $S \ne P_{R,k-1}$.
We do not know whether this cube is transitive or not, but it shall not matter. Indeed, we just estimate
\begin{align*}
|\widetilde \theta^{\sigma}_t (1_S\Delta_{P_{R,k}} f)(x)| &\lesssim \frac{\ell(R)^{\alpha}}{d(R, S)^{m+\alpha}}  \int_{P_{R,k}} |\Delta_{P_{R,k}} f(y)|\,d\sigma(y) \\
&\lesssim \Big(\frac{\ell(R)}{\ell(P_{R, k-1})}\Big)^{\alpha/2} \frac{\sigma(P_{R,k})^{1/2}}{\ell(P_{R, k-1})^m} \| \Delta_{P_{R,k}} f\|_{L^2(\sigma)} \\
&\lesssim 2^{-\alpha k/2} \sigma(P_{R,k-1})^{-1/2} \| \Delta_{P_{R,k}} f\|_{L^2(\sigma)},
\end{align*}
where we used that $\ell(S) = \ell(P_{R,k-1})$, $d(R,S)^{m+\alpha} \ge \ell(R)^{\alpha/2}\ell(S)^{\alpha/2}\ell(S)^m$ and the transitivity of $P_{R,k-1}, P_{R,k}$.
So $|\widetilde \theta^{\sigma}_t (1_{P_{R,k} \setminus P_{R,k-1}}\Delta_{P_{R,k}} f)(x)|$ satisfies the same estimate as in \eqref{eq:same1}.

We are done with the proof if we can control the summation
\begin{displaymath}
\mathop{\mathop{\sum_{R \in \dtr_0:\, R \subset P_0}}_{R \textup{ is } \calD\textup{-good}}}_{\ell(R) < 2^{-r}\ell(P_0)} \sigma(R)
 \Big[ \sum_{k=r+1}^{\log_2 [\ell(P_0)/\ell(R)]}  2^{-\alpha k/2} \sigma(P_{R,k-1})^{-1/2} \| \Delta_{P_{R,k}} f\|_{L^2(\sigma)} \Big]^2.
\end{displaymath}
Using a summation argument that appears in p. 9 in \cite{MM1} we dominate this by
\begin{displaymath}
\sum_{P \in \dtr} \|\Delta_P f\|_{L^2(\sigma)}^2 \lesssim \|f\|_{L^2(\sigma)}^2.
\end{displaymath}
\end{proof}


\begin{thebibliography}{10}
\bibitem{AHMTT} P. Auscher, S. Hofmann, C. Muscalu, T. Tao, C. Thiele, Carleson measures, trees, extrapolation, and $T(b)$ theorems, Publ. Mat. 46 (2) (2002) 257--325.

\bibitem{AHLMT}
P. Auscher, S. Hofmann, M. Lacey, A. McIntosh, P. Tchamitchian,
The solution of the Kato square root problem for second order elliptic operators on $\R^n$,
Ann. of Math. 156 (2) (2002) 633--654.

\bibitem{AR} P. Auscher, E. Routin, Local $Tb$ theorems and Hardy inequalities, J. Geom. Anal. 23 (1) (2013) 303--374. 

\bibitem{AY} P. Auscher, Q. X. Yang, BCR algorithm and the $T(b)$ theorem, Publ. Mat. 53 (1) (2009) 179--196.

\bibitem{Ch} M. Christ, A $T(b)$ theorem with remarks on analytic capacity and the Cauchy integral, Colloq. Math. 50/51 (1990) 601--628.

\bibitem{CGLT} V. Chousionis, J. Garnett, T. Le, X. Tolsa,
Square functions and uniform rectifiability, Trans. Amer. Math. Soc. 368 (2016) 6063--6102.

\bibitem{Da1} G. David, Unrectifiable $1$-sets have vanishing analytic capacity, Rev. Mat. Iberoam. 14 (1998) 369--479.

\bibitem{DJ} G. David, J.-L. Journ\'e, A boundedness criterion for generalized Calder\'on-Zygmund operators,
Ann. of Math. 120 (1984) 371--397.


\bibitem{Ho} S. Hofmann, A local $Tb$ theorem for square functions, Perspectives in partial differential equations, harmonic analysis and applications (Providence, RI, 2008), pp. 175--185, Proc. Sympos. Pure Math., vol. 79, Amer. Math. Soc. (2008)

\bibitem{Ho1} S. Hofmann, A proof of the local $Tb$ theorem for standard Calder\'on--Zygmund operators, unpublished manuscript, arXiv:0705.0840, 2007.

\bibitem{Hy} T. Hyt\"onen, The sharp weighted bound for general Calder\'on-Zygmund operators, Ann. of Math. 175 (2012) 1473--1506.

\bibitem{HN} T. Hyt{\"o}nen, F. Nazarov, The local $Tb$ theorem with rough test functions, preprint, arxiv:1206.0907, 2012.

\bibitem{LM:CZO} 
M. Lacey, H. Martikainen, Local $Tb$ theorem with $L^2$ testing conditions and general measures: Calder\'on--Zygmund operators,
Ann. Sci. \'Ec. Norm. Sup\'er. 49 (2016) 57--86.

\bibitem{LM:SF}  M. Lacey, H. Martikainen, Local $Tb$ theorem with $L^2$ testing conditions and general measures: Square functions, J. Anal. Math. 133 (2017) 71--89.

\bibitem{MM1} H. Martikainen, M. Mourgoglou, Square functions with general measures, Proc. Amer. Math. Soc. 142 (2014) 3923--3931.

\bibitem{MM2}H. Martikainen, M. Mourgoglou, Boundedness of non-homogeneous  square functions and $L^q$ type testing conditions with $q \in (1,2)$, 
Math. Res. Lett. 22 (2015) 1417--1457.


\bibitem{MMT} H. Martikainen, M. Mourgoglou, X. Tolsa, Improved Cotlar's inequality in the context of local $Tb$ theorems, J. Funct. Anal. 274 (2018) 1255--1275.

\bibitem{MMV} H. Martikainen, M. Mourgoglou, E. Vuorinen, Non-homogeneous square functions on general sets: suppression and big pieces methods,
J. Geom. Anal. 27 (2017) 3176--3227.

\bibitem{MV} H. Martikainen, E. Vuorinen, Dyadic--probabilistic methods in bilinear analysis, Mem. Amer. Math. Soc., to appear,
arXiv:1609.01706, 2016.

\bibitem{MaBook} P. Mattila, Geometry of sets and measures in Euclidean spaces: Fractals and rectifiability, Cambridge Studies in Advanced Mathematics, Vol. 44,
Cambridge University Press, Cambridge, 1995.

\bibitem{MV} S. Mayboroda, A. Volberg, Boundedness of the square function and rectifiability, C. R. Math. Acad. Sci. Paris
347 (2009) 1051--1056.

\bibitem{NTVa}
F. Nazarov, S. Treil, A. Volberg, Accretive system $Tb$-theorems on nonhomogeneous spaces, Duke
Math. J. 113 (2) (2002) 259--312.

\bibitem{NTV:IMRN}
F. Nazarov, S. Treil, A. Volberg,
Weak type estimates and Cotlar inequalities for Calder\'on-Zygmund operators on nonhomogeneous spaces,
Int. Math. Res. Not. IMRN 1998 (1998) 463--487.

\bibitem{NTV:Vit}
F. Nazarov, S. Treil, A. Volberg, The $Tb$-theorem on non-homogeneous spaces that proves a conjecture of Vitushkin, unpublished manuscript,
arXiv:1401.2479.

\bibitem{NTV}
F. Nazarov, S. Treil, A. Volberg, The {$Tb$}-theorem on non-homogeneous spaces, Acta Math.
190 (2) (2003) 151--239.

\bibitem{ToBook} X. Tolsa, Analytic capacity, the Cauchy transform, and non-homogeneous Calder\'on--Zygmund theory, Progress in Mathematics, Vol. 307, Birkh\"auser Verlag, Basel, 2014.

\bibitem{To1} X. Tolsa, Painlev\'e's problem and the semiadditivity of analytic capacity, Acta Math. 190 (2003) 105--149.

\bibitem{Vo} A. Volberg, Calder\'on--Zygmund capacities and operators on nonhomogeneous spaces, CBMS Regional Conference Series in Mathematics  vol. 100 (2003) pp. 1--165.

\end{thebibliography}
\end{document}